\documentclass[12pt]{article}

\usepackage[margin=1in]{geometry}  % set the margins to 1in on all sides
\usepackage{graphicx}              % to include figures
\usepackage{amsmath}               % great math stuff
\usepackage{amssymb}               % great math stuff
\usepackage{amsfonts}              % for blackboard bold, etc
\usepackage{amsthm}                % better theorem environments
\usepackage{mathtools}
\usepackage{enumerate}
\usepackage{xparse}
\usepackage{hyperref}
\hypersetup{hidelinks}
\usepackage[utf8]{inputenc}
\usepackage{color}
\usepackage{authblk}
\usepackage{thm-restate}

% various theorems, numbered by section

\newtheorem{theorem}{Theorem}[section]
\newtheorem{lemma}[theorem]{Lemma}

\newtheorem{corollary}[theorem]{Corollary}

\newtheorem{definition}[theorem]{Definition}

  % for bolding symbols
      % for Real numbers
      % for Integers
\newcommand{\N}{\mathbb{N}}      % for Integers

\DeclarePairedDelimiter\abs{\lvert}{\rvert}
\DeclarePairedDelimiter\norm{\lVert}{\rVert}
\DeclarePairedDelimiter\floor{\lfloor}{\rfloor}
\DeclarePairedDelimiter\ceil{\lceil}{\rceil}
%swap star and not star (make \abs resize)
%\makeatletter
%\let\oldabs\abs
%\def\abs{\@ifstar{\oldabs}{\oldabs*}}
%\makeatother

\makeatletter
\let\oldnorm\norm
\def\norm{\@ifstar{\oldnorm}{\oldnorm*}}
\makeatother

\makeatletter
\let\oldfloor\floor
\def\floor{\@ifstar{\oldfloor}{\oldfloor*}}
\makeatother

\makeatletter
\let\oldceil\ceil
\def\ceil{\@ifstar{\oldceil}{\oldceil*}}
\makeatother

\NewDocumentCommand\G{gggg}{%
	\IfNoValueTF{#1}{\mathcal{G}}{%
	\IfNoValueTF{#2}{\mathcal{G}\left({#1}\right)}{%
    \IfNoValueTF{#3}{\mathcal{G}\left({#1}, {#2}\right)}{%
    \IfNoValueTF{#4}{\mathcal{G}\left({#1}, {#2}, {#3}\right)}{%
				\mathcal{G}\left({#1}, {#2}, {#3}, {#4}\right)}}}}%
}
\NewDocumentCommand\tildeG{gggg}{%
	\IfNoValueTF{#1}{\mathcal{\tilde G}}{%
	\IfNoValueTF{#2}{\mathcal{\tilde G}\left({#1}\right)}{%
    \IfNoValueTF{#3}{\mathcal{\tilde G}\left({#1}, {#2}\right)}{%
    \IfNoValueTF{#4}{\mathcal{\tilde G}\left({#1}, {#2}, {#3}\right)}{%
				\mathcal{\tilde G}\left({#1}, {#2}, {#3}, {#4}\right)}}}}%
}

%\reg		-> (e)-regular
%\reg{asd}	-> (asd)-regular
%\reg{e}{p}	-> (e,p)-regular
\NewDocumentCommand\reg{gg}{%
	\IfNoValueTF{#1}{\reg{\epsilon}}{%
	\IfNoValueTF{#2}{\(\left({#1}\right)\)-regular}{\(\left({#1}, {#2}\right)\)-regular}}%
}

\NewDocumentCommand\lreg{gg}{%
	\IfNoValueTF{#1}{\lreg{\epsilon}}{%
	\IfNoValueTF{#2}{\(\left({#1}\right)\)-lower-regular}{\(\left({#1}, {#2}\right)\)-lower-regular}}%
}

\newcommand{\ccint}[2] { \left[ {#1},{#2} \right] }

\newcommand{\set}[1] { \left[ {#1} \right] }
\newcommand*{\eps}{\epsilon}

% comments
\usepackage{color}
\usepackage[usenames,dvipsnames]{xcolor}

\title{Local resilience for squares of almost spanning cycles in sparse random graphs}
%\author{A. Noever$^\ddag$\thanks{author was supported by grant no.\ 200021 143338  of the Swiss National Science Foundation.}, and A. Steger$^\ddag$\thanks{Department of Computer Science, ETH
%Z\"urich, 8092 Z\"urich, Switzerland. \newline Email: {\tt
%\{anoever|asteger\}@inf.ethz.ch}.}
%}
\author[ ]{Andreas Noever\thanks{author was supported by grant no.\ 200021 143338  of the Swiss National Science Foundation.}}
\author[ ]{Angelika Steger}
\affil[ ]{\small Department of Computer Science}
\affil[ ]{ETH Z\"urich, 8092 Z\"urich, Switzerland }
\affil[ ]{\tt {\{anoever|steger\}@inf.ethz.ch}}
\begin{document}

\maketitle

\abstract{
In 1962, Pósa conjectured that a graph $G=(V, E)$ contains a square of a Hamiltonian cycle if
$\delta(G)\ge 2n/3$. Only more than thirty years later Komlós, Sárkőzy, and Szemerédi  proved this
conjecture using the so-called Blow-Up Lemma. Here we extend their result to a random graph setting.
We show that for every $\eps > 0$ and $p=n^{-1/2+\eps}$ a.a.s.\ every subgraph of $G_{n,p}$ with minimum degree at least $(2/3+\eps)np$ contains the square of a cycle on $(1-o(1))n$ vertices.
This is almost best possible in three ways: (1) for $p\ll n^{-1/2}$ the random graph will
not contain any square of a long cycle (2) one cannot hope
for a resilience version for the square of a {\em spanning} cycle (as deleting all edges in the
neighborhood of single vertex  destroys this property)  and (3) for $c<2/3$ a.a.s.\ $G_{n,p}$ contains a subgraph with minimum degree at least $cnp$ which does not contain the square of a path on $(1/3+c)n$ vertices.
}

\section{Introduction}
A classical result of Dirac~\cite{dirac52} states that any graph $G$ on $n\ge 3$ vertices with minimum
degree $\delta(G)\ge n/2$ contains a Hamilton cycle. This result is not difficult and a proof can be
found in most text books on graph theory, see e.g.\ \cite{Bollobas1998modern,west}. One also easily checks
that the constant $1/2$ is best possible:  the complete bipartite graph on $(n-1)/2$ and $(n+1)/2$
vertices (assuming $n$ odd) has minimum degree $n/2-1/2$ but does not contain a Hamilton cycle.

In 1962, Pósa conjectured that $G(V,E)$ contains a square of a Hamiltonian cycle if $\delta(G)\ge
2n/3$. A square of a  cycle $C$ is the cycle $C$ together with all edges between
vertices that have distance $2$ in $C$. Again it is not difficult to see that the constant $2/3$ is
best possible, just consider the complete tripartite graph on $(n-1)/3$, $(n-1)/3$ and $(n+2)/3$ vertices (assuming $3$ divides $n-1$). However proving that minimum degree $2n/3$ actually suffices
turned out to be a difficult problem. It required the development of powerful tools, most notably
Szemerédi’s Regularity Lemma \cite{Szemeredi75,Szemeredi78}  and the so-called Blow-Up Lemma \cite{BlowUp97}, before Pósa's conjecture
was proven, at least for all sufficiently large~$n$~\cite{KSS96}.
\begin{theorem}[Komlós, Sárközy, Szemerédi]\label{theorem:kss}
There exists a natural number $n_0$ such that if $G$ has order $n$ with $n\geq n_0$ and 
\[
\delta(G)\geq \frac{2}{3}n,
\]
then $G$ contains the square of a Hamiltonian cycle.
\end{theorem}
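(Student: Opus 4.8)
The plan is to follow the regularity-method blueprint: combine Szemer\'edi's Regularity Lemma \cite{Szemeredi75} with the Blow-Up Lemma \cite{BlowUp97} to transfer the problem from $G$ to its reduced graph, solve a purely structural problem there, and then embed the square of a cycle cluster by cluster. First I would apply the Regularity Lemma to $G$ with a small parameter $\eps$, obtaining a partition $V_0, V_1, \dots, V_k$ into an exceptional set $V_0$ of size at most $\eps n$ and clusters $V_1, \dots, V_k$ of equal size, almost all pairs of which are $\eps$-regular. Form the reduced graph $R$ on $\{1,\dots,k\}$ by joining $i$ and $j$ whenever $(V_i, V_j)$ is $\eps$-regular of density at least some fixed $d$. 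A standard counting argument shows that $R$ inherits the degree bound, namely $\delta(R) \geq (2/3 - O(\eps))k$; by discarding a few clusters into $V_0$ we may assume $3 \mid k$ and $\delta(R) \geq 2k/3$.

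The heart of the argument is structural. Because consecutive triples of vertices along the square of a cycle form triangles, the natural object to seek in $R$ is a spanning cyclic arrangement of triangles, so that the square of a long path can be embedded inside each regular triple and the segments linked across overlapping triples. Since $\delta(R) \geq 2k/3$, the Corr\'adi--Hajnal (or Hajnal--Szemer\'edi) theorem yields a perfect $K_3$-factor of $R$; the same degree hypothesis lets one thread these triangles into a single cyclically ordered chain using short bridges, exploiting the high connectivity forced by the minimum degree. I would phrase the target as: $R$ contains a spanning structure in which the triangles are arranged in a cyclic order with each consecutive pair sharing enough common neighbourhood to glue the corresponding square-path pieces.

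With this skeleton in hand, I would invoke the Blow-Up Lemma on each regular triple $(V_a, V_b, V_c)$ to embed a square of a path that visits the three clusters cyclically, treating the regular pairs as though complete; this is legitimate because the square of a path has bounded degree, and the Blow-Up Lemma further permits prescribing the images of the few endpoint vertices used to attach to the neighbouring triple. Chaining these embeddings around the cyclic triangle arrangement produces the square of a cycle covering nearly all of $V_1 \cup \dots \cup V_k$.

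The main obstacle, and what makes the \emph{spanning} statement genuinely hard, is achieving an exactly spanning square of a cycle: every vertex of $G$, including those in the exceptional set $V_0$ and any leftover vertices inside clusters, must be incorporated. This forces two delicate refinements. First, the bridging step must be arranged so that the unused vertices within each cluster can be balanced out, since the Blow-Up Lemma only fills clusters evenly. Second, each exceptional vertex $v \in V_0$ must be spliced into the square of the cycle at a location where $v$, together with its neighbours, completes the required triangles, which relies on $\delta(G) \geq 2n/3$ guaranteeing that $v$ has many neighbours in some three consecutive clusters. Carrying out this absorption of $V_0$ while preserving the regularity needed for the Blow-Up Lemma --- the interface between the clean structural picture in $R$ and the actual vertices of $G$ --- is where the bulk of the technical work lies.
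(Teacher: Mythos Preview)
The paper does not prove Theorem~\ref{theorem:kss}; it is quoted from Koml\'os, S\'ark\"ozy and Szemer\'edi \cite{KSS96} and used as a black box. In particular, in the proof of the main theorem the authors apply Theorem~\ref{theorem:kss} to the \emph{reduced} graph $R$ on the clusters $V_1,\dots,V_r$ (noting $r\ge n_0$) to obtain the square of a Hamiltonian cycle in $R$, which is exactly the cyclic triangle skeleton you describe. So there is no ``paper's own proof'' to compare your proposal against.

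That said, your sketch is a fair high-level summary of the actual KSS argument, with one inaccuracy worth flagging. You propose to obtain the cyclic triangle structure in $R$ by first taking a Corr\'adi--Hajnal $K_3$-factor and then ``threading'' the vertex-disjoint triangles together via bridges. This is not how the structure is obtained, and it is not clear it can be made to work: a $K_3$-factor gives vertex-disjoint triangles with no a~priori overlap, and turning it into a cyclic sequence of \emph{edge-sharing} triangles (equivalently, the square of a Hamiltonian cycle in $R$) is essentially the whole problem again at the reduced level. In the original proof one instead shows directly that $\delta(R)\ge 2k/3$ forces $R$ to contain the square of a Hamiltonian path/cycle (or, in some presentations, a connected $K_3$-factor with the right gluing properties), and this structural step is itself nontrivial. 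Your description of the absorption of $V_0$ and the use of the Blow-Up Lemma is accurate in spirit.
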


In modern terminology the above results can also be stated as resilience statements. For a monotone
increasing graph property ${\cal P}$ the (local) resilience of a graph $G=(V,E)$ with respect to
${\cal P}$ is the minimum $r \in \mathbb R$ such that by deleting at each vertex $v \in V$ at most
an $r$-fraction of the edges incident to $v$ one can obtain a graph that does not have property
${\cal P}$. Dirac’s theorem implies that the local resilience of the property 'containing a Hamilton
cycle‘ of the complete graph is $1/2$, while the proof of Posa’s conjecture implies that  the
property 'containing a square of a Hamilton cycle‘ of the complete graph is $1/3$.

A natural extension for resilience results is to consider instead of the complete graph the random
graph $G_{n,p}$ and ask for the resilience as a function of the edge probability~$p$. It is natural
to expect that there exists a threshold $p_0$ so that for $p\gg p_0$ the local resilience of
$G_{n,p}$ of a property ${\cal P}$ is w.h.p.\ equal to the local resilience of the complete graph,
while for $p\ll p_0$ the random graph $G_{n,p}$ w.h.p.\ does not satisfy the property ${\cal P}$  at
all. Indeed, such a result is known, up to polynomial factors, for the property 'contains a Hamilton
cycle'. The threshold for existence of a Hamilton cycle is $p=(\log n + \log \log n + \omega(n))/n$
\cite{KOMLOS198355,BOLLOBAS198397}, while Lee and Sudakov~\cite{LS12} showed that for $p \gg \log
n/n$ the local resilience is $1/2-o(1)$.

The aim of this paper is to study the local resilience for the property  'containing a square of a
Hamilton cycle‘.  As it turns out for this problem already the  threshold for existence  is a hard
problem that, till today, is not yet completely understood. As a square of a Hamilton cycle contains
many triangles, $p\ge c / \sqrt{n}$ is certainly necessary, and it is conceivable that this may be
also be the true answer. The best bound known is $p\ge  C \log^4 n/\sqrt{n}$ \cite{Nenadov16}.

For the resilience problem one is thus tempted to speculate that at least for $p\ge
\operatorname{poly}(\log n)/\sqrt{n}$, for an appropriate polylog-factor, we have that the
resilience of $G_{n,p}$ with respect to the property 'containing a square of a Hamilton cycle' is
$1/3-o(1)$. However, for this property it is easy to see that this is far too optimistic. By
deleting all edges in the neighborhood of a vertex $v$ we can ensure that $v$ cannot be part of any
square of a cycle. Thus for any $p=o(1)$ we have that the resilience for 'containing a square of a
Hamilton cycle‘ is $o(1)$. In order to obtain a non-trivial result we thus need to weaken the
required property. One easily checks that for constant resilience the best one can
hope for is to find a square of a cycle that covers all but $\Theta(1/p^2)$ vertices. Here we show
an approximate version of such a best possible result.

\begin{restatable}{theorem}{mainTheorem}
For every $\gamma,\nu>0$ and $p=n^{-\frac{1}{2}+\gamma/2}$ a.a.s.\ every subgraph of $G_{n,p}$ with minimum degree at least $(2/3+\nu)np$ contains a square of a cycle on at least $(1-\nu)n$ vertices.
\end{restatable}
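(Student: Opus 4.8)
\section*{Proof proposal}

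The plan is to follow the regularity-method blueprint underlying the original proof of \autoref{theorem:kss}, transplanted to the sparse random setting. Fix a spanning subgraph $G' \subseteq G_{n,p}$ with $\delta(G') \ge (2/3+\nu)np$, so that the adversary has deleted at most a $(1/3-\nu)$-fraction of the edges at each vertex. First I would apply a sparse version of Szemer\'edi's regularity lemma to $G'$, with regularity measured relative to the density $p$, obtaining an equipartition $V_0, V_1, \dots, V_t$ with a negligible exceptional set $V_0$ in which all but an $\eps$-fraction of the cluster pairs are \lreg{\eps}{d} for a suitable constant $d>0$. A standard degree-counting argument then shows that the reduced graph $R$ on $\{V_1,\dots,V_t\}$, in which $V_iV_j$ is an edge precisely when the pair is lower-regular of density at least $dp$, inherits minimum degree $\delta(R)\ge(2/3+\nu/2)t$.

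Second, I would feed $R$ into the dense theorem. Since the regularity lemma makes $t$ as large as we like and $\delta(R)\ge\frac23 t$, \autoref{theorem:kss} yields the square of a Hamiltonian cycle in $R$. This square cycle is exactly the skeleton the embedding needs: reading it off gives a cyclic sequence of clusters in which every three consecutive clusters form a lower-regular triangle, and any two clusters at distance at most two in the square cycle are joined by a regular pair of density at least $dp$.

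Third, and this is the technical heart, I would embed the square of a long cycle into $G'$ along this skeleton. I distribute the vertices of a square path evenly among the clusters following the cyclic order, so that every adjacency requirement (distance at most $2$ in the square path) falls on a cluster pair that is regular in $R$. The embedding proceeds vertex by vertex: having placed the last two vertices $x,y$ of the partial square path, the next vertex must be selected from $N(x)\cap N(y)$ restricted to the appropriate cluster. I would leave a $\nu/2$-fraction of each cluster permanently unused, which supplies the slack that makes an \emph{almost}-spanning embedding feasible without invoking a full sparse blow-up lemma, and I would close the square path into a cycle upon completing one traversal of the skeleton. Controlling that the vertices never covered (the exceptional set $V_0$ plus the reserved fractions) number at most $\nu n$ is then a routine bookkeeping matter.

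The main obstacle is controlling these joint neighborhoods in the sparse regime under adversarial deletion. For $p=n^{-1/2+\gamma/2}$ one has $p^2|V_i|=\Theta(p^2 n/t)\gg 1$, so codegrees are typically large; but lower-regularity of the pairs alone does \emph{not} guarantee that $N(x)\cap N(y)$ meets the target cluster in close to the expected $\approx d^2p^2|V_i|$ vertices, because triangle counts in sparse regular pairs are notoriously delicate. This is precisely the regime governed by the Kohayakawa--\L uczak--R\"odl counting lemma, whose validity for subgraphs of $G_{n,p}$ is what forces the threshold $p\gg n^{-1/2}$. I would therefore establish, as a.a.s.\ properties of $G_{n,p}$ that survive deletion of a $(1/3-\nu)$-fraction of the edges at each vertex, that (i) regular pairs restrict to regular pairs on the linear-sized subsets that remain as the embedding consumes vertices, and (ii) for all but a vanishing fraction of candidate pairs $(x,y)$ the relevant joint neighborhood has close to its expected size. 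Packaging these into a resilient embedding lemma for square paths, and verifying that the accumulated errors stay below $\nu n$, is where the bulk of the work lies.
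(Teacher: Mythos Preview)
Your first two steps---applying sparse regularity to obtain a reduced graph of minimum degree at least $(2/3+\nu/2)t$, then invoking \autoref{theorem:kss} to find the square of a Hamiltonian cycle in it---match the paper exactly. The gap is in the embedding step. Your vertex-by-vertex scheme selects $v_{i+1}$ from $N(v_{i-1})\cap N(v_i)$ inside the next cluster, a set of size only $\Theta((dp)^2 n/t)=\Theta(n^\gamma)$ at $p=n^{-1/2+\gamma/2}$, far below the $\eps|V_j|$ threshold at which sparse regularity inheritance applies. Your properties (i)--(ii) do not close this hole: knowing that a vanishing fraction of \emph{all} pairs in $V_j\times V_{j+1}$ have defective codegree says nothing about the pairs $(v_i,z)$ for $z$ ranging over the sublinear set $N(v_{i-1})\cap N(v_i)$, and an adversary with deletion budget $\Theta(np)=\Theta(n^{1/2+\gamma/2})\gg n^\gamma$ per vertex can erase all relevant edges at any single such $z$. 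This is precisely the $p\approx n^{-1/\Delta}$ barrier for sequential embeddings that the paper's introduction flags; your outline would recover the range $p\gg n^{-1/4}$ already handled by the sparse bandwidth theorem, but not $p=n^{-1/2+o(1)}$.

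The paper's remedy is to replace one-step greedy by a look-ahead over $k_0=\lceil 3/\gamma\rceil+4$ consecutive clusters. A pruning step (\autoref{lemma:gtilde}) first discards the few edges in each $(V_i,V_{i+1})$ that fail to close $(1-\eps)n_0(\alpha p)^2$ triangles with $V_{i+2}$, producing the refined structure $\tildeG{P^2_{k_0}}{n_0}{\alpha p}{\eps}$. An expansion lemma (\autoref{pathExpansion}) then shows that in this structure \emph{every} surviving edge of $E(V_1,V_2)$ is connected by a square path to more than half (a $0.52$-fraction) of the edges of $E(V_{k_0},V_{k_0+1})$. The embedding advances $k_0-1$ clusters at a time: from the current terminal edge, extend by any square path whose new terminal edge is again ``good'' in this majority sense; since two majorities must intersect, the chain never breaks, and the same trick closes the cycle at the end. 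The length $k_0\approx 3/\gamma$ is exactly what a path-counting argument (\autoref{spannedPaths}) needs for the reachable edge set of a single edge to grow from $\Theta(n^\gamma)$ triangles up to linear size, after which four further one-step boosts (\autoref{triangleExpansion}) push the fraction past $1/2$. This multi-step expansion mechanism, not a one-step codegree estimate, is the missing idea in your proposal.
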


Our result should be compared to a recent result of Peter Allen, Julia Böttcher, Julia Ehrenmüller and Anusch Taraz \cite{sparseBandwidth}. The authors prove a sparse version of the bandwidth theorem, which in particular implies that $G_{n,p}$ has local resilience for the property 'contains a square of a cycle on $n-C/p^2$ vertices‘ as long as $p\gg (\log n/n)^{1/4}$. Their proof technique as well as previous universality results hit a natural barrier around $p=n^{-1/\Delta}$ where $\Delta$ denotes the maximum degree of the embedded graph. Note that this density is required for any greedy / sequential type of embedding scheme, as for $p\ll n^{-1/\Delta}$ the typical neighbourhood of any $\Delta$ vertices is empty and one thus need to design more sophisticated look-ahead schemes. We achieve this by designing some pruning process  that identifies edges that satisfy some good expansion properties. In this way  we obtain
 the first nontrivial resilience result for almost spanning subgraphs that achieves, up to polylog factors, the optimal density.

A natural question is whether a similar result holds for higher powers of a cycle. Applying the aforementioned bandwidth theorem gives a bound of $p\gg (\log n/ n)^{1/2k}$ for the $k$-th power of an almost spanning cycle. Similarly to the case $k=2$ this does not match the obvious lower bound of $p\geq n^{-1/k}$. Our approach does not generalize easily to this setting. This is mostly due to our reliance on sparse regularity techniques which yield very strong statements about the distribution of the edges, but not on larger structures like triangles or larger complete graphs. 

%Our proof technique relies heavily on the sparse regularity lemma and related tools. These techniques work well for triangles, but so far have not yielded tight counting statements for arbitrary graphs.

\section{Proof}
The proof makes heavy use of the sparse regularity lemma (see \cite{Kohayakawa1997216}) and related
techniques. The definition of an \reg{\eps}{p} graph is briefly stated below. For a more in depth
introduction to the topic see for example \cite{GerkeSteger05}.

\begin{definition}
	A bipartite graph $B=\left(U\cup W,E\right)$ is called \reg{\epsilon}{p} if for all $U'\subseteq U$
	and $W'\subseteq W$ with $\abs{U'}\geq\eps\abs{U}$ and $\abs{W'}\geq\eps\abs{W}$,
	\begin{displaymath}
		\abs{\frac{\abs{E\left(U',W'\right)}}{\abs{U'}\abs{W'}}-\frac{\abs E}{\abs{U}\abs{W}}}\leq\epsilon p.
	\end{displaymath}
	We write \reg{\epsilon} in case $p$ equals the density $\abs E/\left(\abs{U}\abs{W}\right)$.

	$B$ is called \lreg{\eps}{p} if for all $U'\subseteq U$ and $W'\subseteq W$ with $\abs{U'}\geq\eps\abs{U}$
	and $\abs{W'}\geq\eps\abs{W}$,
\[\frac{\abs{E(U',W')}}{\abs{U'}\abs{W'}}\geq(1-\eps)p.\]
\end{definition}

The next two lemmas follow immediately from the above definition.

\begin{lemma}\label{l:degreeok}
Let $B=\left(U\cup W,E\right)$ be an \lreg{\epsilon}{p}  bipartite graph. Then for every $W'\subseteq W$ of size at least $\eps \abs{W}$ all but at most $\eps|U|$ vertices in $U$ have  at least $(1-\eps)|W'|p$ neighbours in $W'$. If $B$ is \reg{\eps} with density $p$, then all but at most $2\eps|U|$ vertices in $U$ have  at least $(1-\eps)|W'|p$ and at most $(1+\eps)|W'|p$ neighbours in $W'$. \qed
\end{lemma}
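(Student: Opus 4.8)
\textit{Proof plan.} Both statements follow directly from the defining inequalities by a double-counting argument run by contradiction, exploiting the fact that the set of ``badly behaved'' vertices, if large enough, is itself an admissible test set.

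\medskip

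For the lower-regular part, fix $W'\subseteq W$ with $\abs{W'}\geq\eps\abs{W}$ and let $U_0\subseteq U$ be the set of vertices having fewer than $(1-\eps)\abs{W'}p$ neighbours in $W'$. The plan is to suppose for contradiction that $\abs{U_0}>\eps\abs U$. Then $U_0$ meets the size threshold required to apply the \lreg{\eps}{p} condition to the pair $(U_0,W')$, which yields
\[
\abs{E(U_0,W')}\geq(1-\eps)p\,\abs{U_0}\abs{W'}.
\]
On the other hand, counting edges through their $U_0$-endpoints and using the defining degree bound on each vertex of $U_0$ gives $\abs{E(U_0,W')}<\abs{U_0}\cdot(1-\eps)\abs{W'}p$, a contradiction. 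Hence $\abs{U_0}\leq\eps\abs U$, as claimed.

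\medskip

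For the \reg{\eps} case with density $p$, I would split the exceptional set into the low-degree vertices $U_0$ (fewer than $(1-\eps)\abs{W'}p$ neighbours in $W'$) and the high-degree vertices $U_1$ (more than $(1+\eps)\abs{W'}p$ neighbours). Applying exactly the argument above to $U_0$, now using the two-sided regularity estimate $\abs{E(U_0,W')}/(\abs{U_0}\abs{W'})\geq(1-\eps)p$, shows $\abs{U_0}\leq\eps\abs U$; symmetrically, assuming $\abs{U_1}>\eps\abs U$ and using the upper bound $\abs{E(U_1,W')}/(\abs{U_1}\abs{W'})\leq(1+\eps)p$ contradicts the lower edge count forced by the large degrees, so $\abs{U_1}\leq\eps\abs U$. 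A union bound gives at most $2\eps\abs U$ exceptional vertices in total. The only point requiring care—rather than a genuine obstacle—is to verify in each case that the candidate exceptional set satisfies the size hypothesis $\abs{U'}\geq\eps\abs U$ before invoking the regularity definition, which is exactly guaranteed by the contradiction assumption; everything else is routine edge counting. \qed
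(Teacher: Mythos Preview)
Your proposal is correct and is exactly the standard argument the paper has in mind: the lemma is stated with an immediate \qed{} and the text remarks that it ``follow[s] immediately from the above definition,'' i.e.\ by precisely this double-counting contradiction using the bad-vertex set as the test set.
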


\begin{lemma}\label{l:subgraphok}
Let $B=\left(U\cup W,E\right)$ be an \reg{\epsilon}  bipartite graph for some $\eps< 1/3$. Then every subgraph $\left(U\cup W,E'\right)$ of $B$ such that
$|E\setminus E'| \le \eps^4|E|$ is \reg{2\epsilon}. \qed
\end{lemma}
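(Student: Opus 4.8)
The plan is to reduce everything to the regularity of $B$ and to control the error introduced by deleting an $\eps^4$-fraction of the edges. Write $p=\abs{E}/(\abs{U}\abs{W})$ for the density of $B$ and $p'=\abs{E'}/(\abs{U}\abs{W})$ for the density of the subgraph; these are precisely the densities against which \reg{\epsilon} and \reg{2\epsilon} are measured. Since $E'\subseteq E$ and $\abs{E\setminus E'}\le\eps^4\abs{E}$, we immediately get $(1-\eps^4)p\le p'\le p$, so $p$ and $p'$ differ only by a multiplicative $(1\pm\eps^4)$ factor, a fact I will use at the very end.

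Next fix any $U'\subseteq U$ and $W'\subseteq W$ with $\abs{U'}\ge 2\eps\abs{U}$ and $\abs{W'}\ge 2\eps\abs{W}$. As $2\eps\ge\eps$, the pair $(U',W')$ meets the size conditions in the definition of \reg{\epsilon}, so the regularity of $B$ gives that $\abs{E(U',W')}/(\abs{U'}\abs{W'})$ lies within $\eps p$ of $p$. The quantity I actually need to control is $\abs{E'(U',W')}$, and since $E'\subseteq E$ and $\abs{(E\setminus E')(U',W')}\le\abs{E\setminus E'}\le\eps^4 p\,\abs{U}\abs{W}$, I have
\[
\abs{E(U',W')}-\eps^4 p\,\abs{U}\abs{W}\ \le\ \abs{E'(U',W')}\ \le\ \abs{E(U',W')}.
\]
The key observation is that the deleted edges contribute only a lower-order error to the density on $U'\times W'$: because $\abs{U'}\abs{W'}\ge 4\eps^2\abs{U}\abs{W}$, dividing the displayed bound by $\abs{U'}\abs{W'}$ shows that the deletions shift the density by at most $\eps^4 p\,\abs{U}\abs{W}/(4\eps^2\abs{U}\abs{W})=\eps^2 p/4$. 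Combining this with the regularity of $B$ yields
\[
p\left(1-\eps-\tfrac{\eps^2}{4}\right)\ \le\ \frac{\abs{E'(U',W')}}{\abs{U'}\abs{W'}}\ \le\ p(1+\eps).
\]

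It remains to re-centre these bounds, which are expressed relative to $p$, around $p'$ with error $2\eps p'$. Using $(1-\eps^4)p\le p'\le p$, the upper deviation of the density from $p'$ is at most $p(\eps+\eps^4)$ and the lower deviation at most $p(\eps+\eps^2/4)$; both are bounded by $2\eps p'$, and each such comparison reduces to an elementary inequality in $\eps$ that holds comfortably for $\eps<1/3$ (the upper estimate, for instance, reduces to $\eps^3+2\eps^4\le 1$). I expect no genuine obstacle here. The only thing to watch is the bookkeeping: the slack gained by enlarging the regularity parameter from $\eps$ to $2\eps$, together with the factor $4\eps^2$ lower bound on $\abs{U'}\abs{W'}$, is exactly what absorbs the $\eps^4$-fraction of deleted edges, so the exponent $4$ in the hypothesis is what makes the two recentred inequalities go through.
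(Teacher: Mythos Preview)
Your argument is correct: the key point is that on sets of size at least $2\eps\abs{U}$ and $2\eps\abs{W}$ the deletion of $\eps^4\abs{E}$ edges perturbs the density by at most $\eps^4 p/(4\eps^2)=\eps^2 p/4$, and the resulting inequalities $\eps^3+2\eps^4\le 1$ and $\eps/4+2\eps^4\le 1$ indeed hold for $\eps<1/3$. The paper does not actually supply a proof of this lemma---it is stated with a \qed\ and introduced as following ``immediately from the above definition''---so your write-up simply fills in the routine verification the authors chose to omit.
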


The so-called sparse regularity lemma allows us to partition every subgraph of a random graph $G_{n,p}$ in \reg{\eps} parts. To make this more precise we need two more definitions.
\begin{definition}
We say that a partition $(V_i)_0^k$ of a set $V$ is $(\eps,k)$-equitable if $\abs{V_0}\leq \eps \abs{V}$ and $\abs{V_1}=\dots=\abs{V_k}$. We call a partition $(V_i)_0^k$ $(\eps,p)$-regular if at most $\eps\binom{k}{2}$ pairs $(V_i,V_j)$ with $1\leq i < j \leq k$ are not \reg{\eps}{p}.
\end{definition}
\begin{definition}A graph $G=(V,E)$ is called $\eta$-upper-uniform with density $p$ if for all disjoint vertex sets $U,W\subseteq V$ of size at least $\eta \abs{V}$, we have
\[
\abs{E(U,W)}\leq (1+\eta)p\abs{U}\abs{W}.
\]
\end{definition}

\begin{theorem}[Sparse Regularity Lemma, Theorem 2 in \cite{Kohayakawa1997216}]\label{sparseregularity}
For any given $\eps>0$ and $k_{\min}\geq 0$ there are constants $\eta>0$ and $k_{\max}\geq k_{\min}$ such that any $\eta$-upper-uniform graph with density $0<p\leq 1$ admits an \reg{\eps}{p} $(\eps,k)$-equitable partition of its vertex set with $k_{\min}\leq k \leq k_{\max}$.
\end{theorem}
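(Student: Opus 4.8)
The statement is the sparse analogue of Szemerédi's regularity lemma, so the plan is to run the classical energy-increment (index) argument, replacing the ordinary edge density by its $p$-normalized version and invoking $\eta$-upper-uniformity to keep the resulting potential bounded. For a partition $\mathcal{P}=(V_i)$ of $V$ define the scaled index
\[
q(\mathcal{P})=\sum_{i<j}\frac{\abs{V_i}\abs{V_j}}{\abs{V}^2}\left(\frac{d(V_i,V_j)}{p}\right)^2,\qquad d(V_i,V_j)=\frac{\abs{E(V_i,V_j)}}{\abs{V_i}\abs{V_j}}.
\]
In the dense regularity lemma the densities lie in $[0,1]$, which makes $q$ automatically bounded; in the sparse regime a single pair could have density close to $1\gg p$ and ruin this. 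This is exactly where $\eta$-upper-uniformity enters: for any two parts of size at least $\eta\abs{V}$ one has $d(V_i,V_j)\le(1+\eta)p$, hence every normalized density is at most $1+\eta$ and, using $\sum_{i<j}\abs{V_i}\abs{V_j}/\abs{V}^2\le 1/2$, we get $q(\mathcal{P})\le(1+\eta)^2/2$. Thus $q$ is bounded above by an absolute constant, uniformly over partitions, as long as all parts stay larger than $\eta\abs{V}$.

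First I would prove the refinement (increment) lemma. If $\mathcal{P}$ is not $(\eps,p)$-regular, then at least $\eps\binom{k}{2}$ pairs are irregular, and for each such pair $(V_i,V_j)$ there are witness sets $U'\subseteq V_i$, $W'\subseteq V_j$ with $\abs{U'}\ge\eps\abs{V_i}$, $\abs{W'}\ge\eps\abs{V_j}$ whose density deviates from $d(V_i,V_j)$ by more than $\eps p$, i.e.\ whose normalized density deviates by more than $\eps$. Refining each $V_i$ simultaneously according to all witness sets it participates in yields $\mathcal{P}'$, and a defect form of the Cauchy--Schwarz inequality applied to the normalized densities shows that each irregular pair contributes a gain of at least $\eps^4$ times its mass $\abs{V_i}\abs{V_j}/\abs{V}^2$. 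Summing over the $\ge\eps\binom{k}{2}$ irregular pairs gives $q(\mathcal{P}')\ge q(\mathcal{P})+c\,\eps^5$ for an absolute constant $c>0$, while the refinement multiplies the number of parts by at most $2^{k}$.

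Next I would assemble the iteration. Starting from an arbitrary $(\eps,k_{\min})$-equitable partition, I repeatedly test $(\eps,p)$-regularity; whenever it fails I apply the increment lemma and then re-equalize by chopping the new parts into atoms of a common size and collecting the bounded remainder into $V_0$ (this equitable-refinement bookkeeping is standard and perturbs $q$ negligibly). Each failing step raises $q$ by at least $c\eps^5$, while $q$ never exceeds $(1+\eta)^2/2$, so the process halts after at most $t=\lceil(1+\eta)^2/(2c\eps^5)\rceil$ steps at a partition that is both $(\eps,p)$-regular and $(\eps,k)$-equitable; iterating the part-count bound $t$ times produces the promised $k_{\max}=k_{\max}(\eps,k_{\min})$.

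The one genuinely sparse obstacle is the interaction of the two quantifiers. The witness sets produced by irregularity have size at least $\eps\abs{V_i}\approx\eps\abs{V}/k$, whereas upper-uniformity — the sole tool bounding $q$ — controls only sets of size at least $\eta\abs{V}$. I must therefore guarantee $\eta\le\eps/k_{\max}$ for the final $k_{\max}$; but $k_{\max}$ is fixed by $\eps$ and $k_{\min}$ through the termination count and is independent of $\eta$. Hence the correct order of choices is: fix $\eps,k_{\min}$, read off $k_{\max}$ from the index argument, and only then pick $\eta$ below $\eps/k_{\max}$. With the quantifiers arranged this way every witness set encountered stays large enough for upper-uniformity to bound its density, the index remains bounded throughout, and the argument closes.
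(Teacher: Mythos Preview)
The paper does not prove this theorem; it is quoted as Theorem~2 of Kohayakawa~\cite{Kohayakawa1997216} and used as a black box, so there is no in-paper proof to compare against. Your sketch is exactly the standard proof of the sparse regularity lemma: normalize all densities by $p$, run Szemer\'edi's index-increment iteration, and invoke $\eta$-upper-uniformity to keep the index bounded by $(1+\eta)^2/2$. This is how Kohayakawa's original argument proceeds.

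One minor inaccuracy in your last paragraph: the reason one needs $\eta$ small relative to $1/k_{\max}$ is not to control the normalized density on the \emph{witness} sets, but on the \emph{parts} of each equitable partition encountered. The defect Cauchy--Schwarz step only uses the deviation $\abs{d(U',W')-d(V_i,V_j)}>\eps p$ and needs no upper bound on $d(U',W')$; what must be bounded is $q(\mathcal{P}_t)$ at every equitable stage, and for that the parts (of size roughly $\abs{V}/k\ge\abs{V}/k_{\max}$) must have size at least $\eta\abs{V}$, which forces $\eta\le 1/k_{\max}$. Your stronger choice $\eta\le\eps/k_{\max}$ is of course fine. Since the upper bound $(1+\eta)^2/2\le 2$ holds for all $\eta\le 1$, the number of iterations---and hence $k_{\max}$---depends only on $\eps$ and $k_{\min}$, so there is no circularity in fixing $\eta$ afterwards, exactly as you note.
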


It is well-known (and an easy consequence of Chernoff's inequality) that a random graph $G_{n,p}$ with $p\gg 1/n$ is a.a.s.\  $\eta$-upper-uniform. The Sparse Regularity Lemma can thus be applied a.a.s.\ to the graph $G_{n,p}$. For our proof we need the slightly stronger statement that a.a.s.\ every subgraph of $G_{n,p}$ that satisfies some  minimum degree condition contains a particularly nice regular partition. The proof follows routinely by standard arguments. We include it for convenience of the reader.

\begin{corollary}\label{l:standard}
For every $\mu,\nu,\eps>0$  and every positive integer $r_{\min}$ there exists $\alpha(\nu)$ and $r_{\max}(\nu,\eps, r_{\min})$ such that for $p\gg 1/n$
a.a.s.\ every spanning subgraph $\tilde G\subseteq G_{n,p}$ with minimum degree at least $(\mu+\nu)np$ contains a partition of the vertices $V=V_0\cup V_1\cup\dots\cup V_r$, where $r\in[r_{\min}, r_{\max}] $, such that $\abs{V_0}\leq \eps n$, $\abs{V_1}=\dots=\abs{V_r}$ and such that for every $i$ there exist  at least $\mu r$ indices $j\in\set{r}\setminus\{i\}$ such that $\tilde G[V_i, V_j]$ contains a spanning \reg{\eps} subgraph with  $\floor{\abs{V_i}\abs{V_j}\alpha p}$ edges.
\end{corollary}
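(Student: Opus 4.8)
The plan is to apply the Sparse Regularity Lemma to $\tilde G$, pass to its reduced graph, and show that the minimum-degree hypothesis forces every cluster to be joined to many others by dense regular pairs, the exact edge count then being achieved by sparsifying each such pair. I would fix the parameters in the order $\alpha:=\nu/2$ (so $\alpha$ depends only on $\nu$), then a small $\eps'$, and apply \autoref{sparseregularity} with regularity parameter $\eps'$ and a lower bound $k_{\min}\ge r_{\min}$ taken large enough to absorb the lower-order terms appearing below; this returns $\eta>0$ and $k_{\max}$, and I set $r_{\max}:=k_{\max}$. Since $\eta$-upper-uniformity is monotone under edge deletion and $G_{n,p}$ is a.a.s.\ $\eta$-upper-uniform, every $\tilde G\subseteq G_{n,p}$ is a.a.s.\ $\eta$-upper-uniform with density $p$, so \autoref{sparseregularity} applies to $\tilde G$ and yields an \reg{\eps'}{p}, $(\eps',k)$-equitable partition $V_0',V_1,\dots,V_k$ with $k_{\min}\le k\le k_{\max}$. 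I also record the a.a.s.\ fact (via Chernoff and a union bound) that in $G_{n,p}$ every vertex set of size $\Theta(n)$, and every pair involving such a set, spans at most a $(1+\eta)$-factor above its expected edge count; as $k\le k_{\max}$ is bounded, each cluster has size $m:=\abs{V_i}=\Theta(n)$, so these upper bounds (inherited by $\tilde G$) control $e(V_i)$, $e(V_i,V_0')$ and every $e(V_i,V_j)$.

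Next I would clean the partition. Call a cluster \emph{bad} if it lies in more than $\sqrt{\eps'}\,k$ irregular pairs; since there are at most $\eps'\binom{k}{2}$ irregular pairs in total, a standard incidence count shows there are at most $\sqrt{\eps'}\,k$ bad clusters. I move all bad clusters together with $V_0'$ into a new exceptional set $V_0$, relabel the survivors as $V_1,\dots,V_r$, and set $r:=k-(\text{number of bad clusters})\ge(1-\sqrt{\eps'})k$. Then $r\in[r_{\min},r_{\max}]$, the surviving $\abs{V_i}$ are all equal, and $\abs{V_0}\le(\eps'+\sqrt{\eps'})n\le\eps n$ for $\eps'$ small; crucially, each surviving cluster now lies in at most $\sqrt{\eps'}\,k$ irregular pairs.

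The heart of the argument, and the main obstacle, is the reduced-graph minimum-degree bound. Call a pair $(V_i,V_j)$ \emph{good} if it is \reg{\eps'}{p} and has density at least $\alpha p$, and let $D_i$ be the number of good $j$ for a surviving $V_i$. The minimum-degree hypothesis gives $e(V_i,V\setminus V_i)\ge m(\mu+\nu)np-2e(V_i)$. Splitting the outgoing edges over their destinations — the exceptional set, the at most $\sqrt{\eps'}\,k$ irregular pairs, the at most $k$ low-density regular pairs, and the $D_i$ good pairs — and bounding each contribution ($e(V_i,V_0)\le(1+\eta)pm\abs{V_0}$, each irregular or good pair by $(1+\eta)pm^2$, each low-density pair by $\alpha pm^2$, and $e(V_i)\le(1+\eta)pm^2/2$), I divide through by $pm^2$ and use $k\le n/m\le k/(1-\eps')$. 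Every error term is then governed by $\eps'$, by $\alpha=\nu/2$, and by $1/k$; choosing $\eps'$ small enough that $\sqrt{\eps'}$ and the exceptional-set term fall below $\nu/4$, and $k_{\min}$ large enough that the $1/k$ term falls below $\nu/4$, leaves a positive surplus and yields $D_i\ge\mu k\ge\mu r$. The delicate point is precisely that irregular pairs may concentrate on a few clusters; the cleaning step is what guarantees the uniform per-cluster bound $\sqrt{\eps'}\,k$ that makes the count succeed for \emph{every} surviving $i$.

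Finally I would normalise the densities. Each good pair $(V_i,V_j)$ is \reg{\eps'}{p} with density at least $\alpha p$, and I claim it contains a spanning \reg{\eps} subgraph with exactly $\floor{m^2\alpha p}$ edges. Since we delete a constant fraction of the edges, \autoref{l:subgraphok} does not apply; instead I take a uniformly random subset of $\floor{m^2\alpha p}$ edges and show it is \reg{\eps} with positive probability. For any $U'\subseteq V_i$, $W'\subseteq V_j$ with $\abs{U'}\ge\eps m$ and $\abs{W'}\ge\eps m$, the regularity of $(V_i,V_j)$ pins the expected number of retained edges in $(U',W')$ to $\alpha p\abs{U'}\abs{W'}$ up to an additive $\eps'p\abs{U'}\abs{W'}\le\tfrac{\eps}{2}\alpha p\abs{U'}\abs{W'}$ once $\eps'\le\eps\alpha/2$, and a Chernoff bound for sampling without replacement confines the deviation to a further $\tfrac{\eps}{2}\alpha p\abs{U'}\abs{W'}$ with failure probability $\exp(-\Omega(\eps^6\alpha pm^2))$. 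As $pm=\Theta(pn)\to\infty$, this beats the $4^m$ union bound over all choices of $(U',W')$, so the required subgraph exists. Collecting the constraints — $\alpha=\nu/2$, $\eps'$ small enough for the cleaning, degree, and sampling steps, and $k_{\min}$ large — completes the proof.
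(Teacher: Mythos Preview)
Your proof is correct and follows essentially the same route as the paper: apply the sparse regularity lemma to $\tilde G$, discard the at most $\sqrt{\eps'}k$ clusters lying in too many irregular pairs, use the minimum-degree hypothesis and edge-count upper bounds to force each surviving cluster into at least $\mu k$ dense regular pairs, and finally sparsify each such pair down to $\floor{m^2\alpha p}$ edges. The only noteworthy difference is the last step: the paper simply cites Lemma~4.3 of \cite{GerkeSteger05} (an \reg{\eps_0}{p} pair of density at least $\alpha p$ is \reg{\eps_0/\alpha}, hence contains an \reg{2\eps_0/\alpha} subgraph of the prescribed size), whereas you supply a self-contained random-subsampling-plus-Chernoff argument; both work, and yours is essentially what that lemma encapsulates. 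One small cosmetic point: you use the constant $\eta$ returned by \autoref{sparseregularity} also for your Chernoff edge bounds, but since the clusters have size $\Theta(n)$ you are free to use any fixed constant there (the paper uses its own $\eps_0$), so the degree inequality goes through with room to spare once $\eps'$ and $1/k_{\min}$ are small.
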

\begin{proof}
We choose $\alpha$, $k_{\min}$ and  $\eps_0$ such that  inequality $(*)$ from below and the following inequalities are satisfied simultaneously:
$$
\sqrt{\eps_0} \leq \frac{\nu}{2}, \qquad 2\eps_0/\alpha < \eps, \qquad (1-\sqrt{\eps_0})k_{\min}\geq r_{\min},\qquad\eps_0 + 2\sqrt{\eps_0}  \leq \eps.
$$
Suppose that $\tilde G$ is a spanning subgraph of $G_{n,p}$ with $\delta(\tilde G)\geq
(\mu+\nu)np$. For every $\eta>0$ a.a.s.\ every subgraph of the random graph $G_{n,p}$ is $\eta$-upper-uniform with density $p$ and thus the sparse regularity lemma can be applied to every subgraph of $G_{n,p}$. The sparse regularity lemma (\autoref{sparseregularity})
gives us a constant $k_{\max}(\eps_0, k_{\min})$ such that we  find an \reg{\eps_0}{p} $(\eps_0,k)$-equitable partition
$(V_i)_{0}^{k}$ of $\tilde G$ for some $k\in [k_{\min},k_{\max}]$.

Denote with $\tilde n\in\ccint{(1-\eps_0)n/k}{n/k}$ the size of the partition classes. For $i\in\set{k}$ define
\[
	d_i\coloneqq \{j\in \set{k}\setminus \{i\}\mid \text{$V_i$, $V_j$ has density at least $\alpha p$ in $\tilde G$}\}.
\]
Note that a.a.s.\ in $G_{n,p}$ we have that $\abs{E(V_i,V_j)} \leq (1+\eps_0) \tilde n^2p$ for all $i,j\in\set{k}$ and, with room to spare, $\abs{E(V_i, V_0\cup V_i)}\leq
2(\eps_0n+\tilde n) \tilde n p$ for all $i\in\set{k}$. The minimum degree condition of $\tilde G$ thus implies that
for all $i\in\set{k}$
\[ 
	\tilde n\left(\mu+\nu\right)np
	\leq E(V_i,V_0\cup V_i) + \sum_{\mathclap{j\in\set{k}\setminus i}} E(V_i, V_j)
	\leq  2(\eps_0 n+\tilde n)\tilde n p +  k \alpha p \tilde n^2 + d_i (1+\eps_0) \tilde n^2 p
\]
and thus
\[
	d_i
	\geq \frac{\left(\mu+\nu\right)n - 2(\eps_0 n +\tilde n) - k\alpha \tilde n}{(1+\eps_0)\tilde n}
	\geq  \frac{\left(\mu+\nu\right) k - 2(\frac{\eps_0}{1-\eps_0}k + 1)   -  k\alpha}{(1+\eps_0)}
	\stackrel{(*)}\geq \left(\mu+\frac{\nu}{2}\right) k.
\]
%for  $k_{\min}$, $n$ large enough and $\eps_0$, $\alpha$ small enough depending on $\nu$. 

Observe that the fact that at most $\eps_0\binom{k}2$ pairs $(V_i,V_j)$ are not \reg{\eps_0}{p} implies that there are at most $\sqrt{\eps_0} k$ indices in $\set{k}$ for which the set
\[
	\{j\in \set{k}\setminus \{i\}\mid \text{$(V_i, V_j)$ is not  \reg{\eps_0}{p} graph}\}
\]
has size at least $\sqrt{\eps_0}k$. Every \reg{\eps_0}{p} graph with density at least $\alpha p$ is \reg{\eps_0/\alpha} and thus 
contains a \reg{2\eps_0/\alpha} subgraph with $\floor{\tilde n^2 \alpha p}$ edges (see for example Lemma 4.3 in
\cite{GerkeSteger05}).
As $\sqrt{\eps_0} \leq \frac{\nu}{2}$, we can thus find
a subset $R\subseteq \set{k}$ of at least $(1-\sqrt{\eps_0})k$ indices such that for every
$i\in R$ the set
\[
	\{j\in R\setminus \{i\}\mid \text{$V_i$, $V_j$ contain an \reg{2\eps_0/\alpha} graph with  $\floor{\tilde n^2\alpha p}$ edges}\}
\]
 is of size at least $\mu k$. Wlog.\ we may assume that $R=\left\{1,\dots,r\right\}$, where $r\geq (1-\sqrt{\eps_0})k\geq r_{\min}$. By choice of $\eps_0$ we know that the cardinality of  $V_0\cup \bigcup_{i> r} V_i$ is at most $\eps_0 n + \sqrt{\eps_0}k \tilde n \leq \eps n$. Thus $V_0\cup\bigcup_{i>r} V_i,V_1,\dots,V_r$ is the desired partition and the corollary thus holds for $r_{\max}=k_{\max}$.
\end{proof}

With \autoref{l:standard} at hand, an alert reader will certainly be able to guess our proof strategy:  apply the Komlós, Sárközy, Szemerédi Theorem to the partition guaranteed by \autoref{l:standard}  in order to find a square of a cycle for this partition and then find a long square of a cycle within this structure. The next definitions provide the notations to make this idea precise.
%To add additional structure to the \reg{\eps}{p} pairs provided by the sparse regularity lemma 
First we define the notion of a regular blow-up of a graph.
\begin{definition}
Denote with $\G{F}{n}{p}{\eps}$ the class of graphs that consist of $\abs{V(F)}$ pairwise disjoint
vertex sets of size $n$. Each vertex set represents a vertex of $F$, and two vertex sets span an
\reg{\eps} graph with density $(1\pm\eps)p$ whenever the corresponding vertices are adjacent in $F$.
\end{definition}

With $P_k$ we denote a path of length~$k$, with vertex set $\{1,\ldots,k+1\}$ and edges $\{i,i+1\}$ for $1\le i\le k$.
The cycle $C_k$ is obtained from $P_k$ by identifying the vertices $1$ and $k+1$. The square of a path is denoted with $P_k^2$ and the
 square of a cycle with $C^2_k$. For a graph $F\in\left\{P^2_k,C^2_k\right\}$ and its blow-up
$G\in\G{F}{n}{p}{\eps}$ we denote the vertex sets of $G$ by $V_1,\dots,V_{\abs {F}}$ and assume that the vertex set $V_i$ represents the $i$-th vertex along the path (or cycle). We collect some additional properties of $\G{P^2_k}{n}{p}{\eps}$ in the following definition: \begin{definition}\label{def:gtilde}
Denote with $\tilde G(P^2_{k}, n, p, \eps)\subseteq \G{P^2_{k}}{n}{p}{\eps}$
the class of graphs in which for $i\in\{1,\dots,k-1\}$
\begin{list}{}{}
\item[$(i)$] every edge spanned by $V_i, V_{i+1}$ closes a triangle with at least
	$(1-\eps)np^2$ vertices in $V_{i+2}$, and
\item[$(ii)$] all but $\eps n$ vertices $v\in V_{i+1}$ have neighboorhoods  into $V_i$ and $V_{i+2}$ which induce an
	\lreg{\eps}{p} subgraph. 
\end{list}
\end{definition}

We also need two auxiliary lemmas related to the above definition. Their proofs are deferred to \autoref{sec:gtilde} and \autoref{sec:pathExpansion}.
The first lemma states that in the random graph the restrictions imposed by \autoref{def:gtilde} are easy to satisfy:
\begin{restatable}{lemma}{gtilde}\label{lemma:gtilde}
For every $\alpha,\eps,\gamma,k_{\max}>0$ there exists $\eps'>0$ such that for every $\eta>0$ a.a.s.\ in $G_{n,p}$
with $p=n^{-1/2+\gamma/2}$ every subgraph $G\subseteq G_{n,p}$, where $G\in\G{P^2_k}{n_0}{\alpha p}{\eps'}$, $n_0\geq \eta n$, $k\leq k_{\max}$
contains a spanning subgraph from $\tildeG{P^2_k}{n_0}{\alpha p}{\eps}$.
\end{restatable}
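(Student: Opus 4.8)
Since the statement asserts that a.a.s.\ $G_{n,p}$ forces a property on \emph{every} blow-up $G$ inside it, the plan is to separate probability from combinatorics: first isolate a short list of pseudorandomness features that $G_{n,p}$ possesses a.a.s., and then argue deterministically that any $G\in\G{P^2_k}{n_0}{\alpha p}{\eps'}$ contained in such a ``good'' graph becomes a member of $\tildeG{P^2_k}{n_0}{\alpha p}{\eps}$ after deleting a vanishing fraction of its edges. The controlling quantity is $n_0p^2\geq\eta np^2=\eta n^{\gamma}$, which, because $\gamma>0$, satisfies $n_0p^2\gg\log n$ with polynomial room; every tail estimate below cashes in this slack. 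I fix $\eps'\leq\eps/2$ depending only on $\eps$, while the set-size threshold $\delta$ and the fraction $\delta'$ entering the two features below may depend on $\eps,\alpha,\eta$.

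The two features I need are the following. (a) a.a.s.\ every pair of disjoint sets $A,B\subseteq V$ with $\abs{A},\abs{B}\geq\delta pn$ spans a bipartite graph that is \lreg{\eps}{p}. For a fixed sub-pair $A'\subseteq A$, $B'\subseteq B$ of sizes $\geq t:=\eps\delta pn$ a Chernoff bound gives deficiency probability $e^{-c\eps^2t^2p}$; since $t^2p/(t\log n)=tp/\log n\gtrsim n^{\gamma}/\log n\to\infty$, this dominates the $e^{O(t\log n)}$ ways of choosing the sets, so the union bound closes. (b) a.a.s.\ for every ordered triple of pairwise disjoint linear-sized sets $(X,Y,Z)$, all but a $\delta'$-fraction of the edges of $G_{n,p}$ between $X$ and $Y$ have at least $(1-\eps)\abs{Z}p^2$ common neighbours in $Z$. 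A single pair $(u,w)$ is deficient with probability at most $e^{-c\eps^2\abs{Z}p^2}=e^{-c'n^{\gamma}}$, but promoting this into a bound on the \emph{number} of deficient edges that survives the $\binom{n}{n_0}^3=e^{\Theta(n)}$ choices of $(X,Y,Z)$ is the real work, discussed below.

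Granting (a) and (b), the repair is mechanical. \emph{Condition $(ii)$.} By \autoref{l:degreeok} applied to the \reg{\eps'} pairs $(V_i,V_{i+1})$ and $(V_{i+1},V_{i+2})$, all but $2\eps'n_0\leq\eps n$ vertices $v\in V_{i+1}$ satisfy $\abs{N(v)\cap V_i},\abs{N(v)\cap V_{i+2}}\geq(1-\eps')\alpha pn_0\geq\delta pn$; for every such $v$, feature (a) makes the bipartite graph induced on these two neighbourhoods \lreg{\eps}{p}, which is exactly $(ii)$, and no edge need be removed. \emph{Condition $(i)$.} Here the target $(1-\eps)n_0p^2$ is a codegree in the host graph $G_{n,p}$ — it exceeds the $\alpha^2$-discounted count one could hope to read off from $G$ itself, which already signals that the triangle-closing vertices must be counted in $G_{n,p}$. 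Feature (b) with $(X,Y,Z)=(V_i,V_{i+1},V_{i+2})$ shows that all but a $\delta'$-fraction of the $G_{n,p}$-edges between $V_i$ and $V_{i+1}$, hence at most $\delta' pn_0^2$ of the edges of $G$ there, close fewer than $(1-\eps)n_0p^2$ triangles into $V_{i+2}$; I delete exactly these. Performing this for each $i\in\{1,\dots,k-1\}$ removes from each consecutive pair at most $\delta' pn_0^2\leq\eps'^4\alpha pn_0^2$ edges once $\delta'\leq\eps'^4\alpha$, so \autoref{l:subgraphok} keeps every pair \reg{2\eps'} with density $(1\pm2\eps')\alpha p$. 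As $\eps'\leq\eps/2$ the resulting spanning subgraph $G'$ still lies in $\G{P^2_k}{n_0}{\alpha p}{\eps}$ and satisfies $(i)$ and $(ii)$; that is, $G'\in\tildeG{P^2_k}{n_0}{\alpha p}{\eps}$. Deleting edges between consecutive parts changes neither the $G_{n,p}$-codegrees underlying (b) nor — beyond a harmless shrinkage of the neighbourhoods — the lower-regularity used in $(ii)$.

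The whole weight of the argument rests on feature (b), and this is intrinsic: sparse regularity of $G$ is silent about triangles, and the relevant neighbourhoods $N_G(u)\cap V_{i+2}$ have size $\approx\alpha pn_0\ll\eps'n_0$, well below the scale at which \reg{\eps'}-regularity of $G$ carries information, so the codegrees simply cannot be recovered from $G$ and must be taken from $G_{n,p}$. The delicacy is uniformity over the adversarially chosen parts: a first-moment bound gives expected number of deficient edges $n_0^2p\,e^{-c\eps^2n_0p^2}\ll1$ per triple, but this beats the $e^{\Theta(n)}$ triples only once $\eps^2n^{\gamma}\gg n$, i.e.\ $\gamma>1$. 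To reach all $\gamma>0$ one must control the upper tail of the deficient-edge count directly, bounding the probability that it exceeds $\delta' pn_0^2$ by $e^{-\Omega(pn_0^2\cdot n^{\gamma})}$; the exponent, a power of $n$ strictly larger than $1$, is again furnished by the room in $n_0p^2=n^{\gamma}$, while the positive correlations among the deficiency events of pairs sharing a vertex have to be absorbed, e.g.\ by a higher-moment computation or by a union over the deficiency witnesses — the unexpectedly large sets of $z\in Z$ failing to be common neighbours. I expect this uniform tail estimate to be the technical heart of the proof deferred to \autoref{sec:gtilde}.
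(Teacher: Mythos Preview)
Your argument rests on a misreading of \autoref{def:gtilde}. In $\tildeG{P^2_k}{n_0}{\alpha p}{\eps}$ the parameters $n,p$ of the definition are instantiated as $n_0$ and $\alpha p$, so the triangle threshold in~$(i)$ is $(1-\eps)n_0(\alpha p)^2$, not $(1-\eps)n_0p^2$, and the lower-regularity in~$(ii)$ is with parameter $\alpha p$; moreover, both conditions refer to triangles, neighbourhoods and induced subgraphs \emph{in the graph $G$ under consideration}, not in the ambient $G_{n,p}$. Your remark that the target ``exceeds the $\alpha^2$-discounted count one could hope to read off from $G$'' should have flagged this: with the correct threshold, $(i)$ is achievable inside $G$, and indeed must be, since \autoref{pathExpansion} uses it to count squared paths lying entirely in $G$. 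Once the definition is read correctly your features~(a) and~(b) are beside the point: they are Chernoff facts about $G_{n,p}$, whereas $G$ is an \emph{adversarial} \reg{\eps'} subgraph of density $\approx\alpha p$, and nothing you establish about $G_{n,p}[A,B]$ or about $G_{n,p}$-codegrees transfers to $G[A,B]$ or to triangle counts in $G$.

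The paper therefore takes a quite different route. The governing tool is the sparse regularity inheritance theorem of Gerke--Kohayakawa--R\"odl--Steger (\autoref{inheritance}), from which one bounds the \emph{number} of $\reg{\eps_0}$ tripartite graphs on $3n_0$ vertices with $m$ edges per pair in which many vertices have non-inheriting neighbourhoods (\autoref{lregnb}) or many edges lie in too few triangles (\autoref{triangles}); these counts are at most $\beta^{m}\binom{n_0^2}{m}^3$, small enough that a union bound shows $G_{n,p}$ a.a.s.\ contains no such ``bad'' triple. With this in hand, one removes edges from $E(V_i,V_{i+1})$ for $i=k-1,k-2,\dots,1$ \emph{sequentially}, at step $i$ deleting edges that lie in fewer than $(1-\eps)n_0(\alpha p)^2$ triangles with respect to the \emph{already-trimmed} pair $(V_{i+1},V_{i+2})$; a cascade of thresholds $\eps_0>\delta_1>\eps_1>\dots>\eps_{k-1}>\eps'$ ensures that at each step the previously processed pair is still regular enough for \autoref{triangles} to apply, so too many deletions would force one of the forbidden triples. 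Your single-pass deletion, even with the correct reading, would fail here, since removing edges from $(V_{i+1},V_{i+2})$ destroys triangles for edges in $(V_i,V_{i+1})$.
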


For $G\in\tildeG{P^2_k}{n}{p}{\eps}$ with vertex partitions $V_1,\dots,V_k$ we say that an edge $e\in E(V_1, V_2)$ \emph{expands} to a set of edges $E'\subseteq E(V_{i},V_{i+1})$ if there exists a square of a path of length $i$ in $G$ between $e$ and every edge of $E'$.
The next lemma asserts that in the random graph every edge in $E(V_1,V_2)$ expands to a majority of the edges in $E(V_ {k},V_{k+1})$, whenever $k$ is sufficiently large.
Property (1) of \autoref{def:gtilde} already guarantees that every edge spanned by $V_1, V_2$ is contained in many
squares of a path ending in $E(V_ {k},V_{k+1})$. But since these paths may overlap this alone does not imply that every edge expands to a large portion of $E(V_ {k},V_{k+1})$. 

\begin{restatable}{lemma}{pathExpansion}\label{pathExpansion}
Let $\eta,\alpha,\gamma>0$ and $k\geq \frac{3}{\gamma}$ be fixed and let $p=n^{-1/2+\gamma/2}$. For
$\eps$ small enough depending on $\alpha$ a.a.s.\ every subgraph $G\subseteq G_{n,p}$ which is from
$G\in\tildeG{P^2_{k+4}}{n_0}{\alpha p}{\eps}$ for some  $n_0\geq \eta n$ satisfies the following:
all edges in $G[V_1\cup V_2]$ expand to at least a $0.52$-fraction of the edges in $G[V_{k+4}\cup V_{k+5}]$.
\end{restatable}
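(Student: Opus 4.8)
The plan is to fix an arbitrary edge $e_0\in E(V_1,V_2)$ and to track, level by level, the set $R_j\subseteq E(V_j,V_{j+1})$ of edges to which $e_0$ expands via a square of a path of length $j$; thus $R_1=\{e_0\}$ and the goal is $\abs{R_{k+4}}\geq 0.52\,\abs{E(V_{k+4},V_{k+5})}$. The mechanism is purely local: an edge $(x,y)\in R_j$ (with $x\in V_j$, $y\in V_{j+1}$) expands to $(y,z)\in R_{j+1}$ exactly when $z\in V_{j+2}$ is a common neighbour of $x$ and $y$, since in the square of the path the new vertex must be adjacent to the two previous ones. Writing $L_j(y)=\{x:(x,y)\in R_j\}$ for the reachable left-neighbours of $y$, an edge $(y,z)$ therefore lies in $R_{j+1}$ iff $z$ has a neighbour in $L_j(y)$. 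As $0.52$ is merely some constant strictly above $1/2$, it suffices to reach a reachable fraction of $1-O(\sqrt{\eps})$, which exceeds $0.52$ once $\eps$ is small. I would prove the bound for one fixed $e_0$ on a typical $G_{n,p}$ and afterwards take a union bound over the at most $n^2$ choices of $e_0$ and the $O(1)$ index positions.

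I would split the argument into a \emph{growth} phase and a \emph{saturation} phase. In the growth phase the reachable set is still a sub-$\eps$ fraction of each class, so the sparse regularity of the pairs is useless and property $(i)$ of \autoref{def:gtilde} carries the argument: every edge of $R_j$ closes a triangle with at least $(1-\eps)n_0p^2$ vertices of $V_{j+2}$ and hence expands to at least $(1-\eps)n_0p^2$ edges of $R_{j+1}$. The aim is to show these expansions do not pile up, so that the reachable structure genuinely multiplies by $\Theta(n_0p^2)=\Theta(n^{\gamma})$ per level: first the number $\abs{W_j}$ of reachable right-endpoints $W_j=\{y:L_j(y)\neq\emptyset\}$ grows until it fills a constant fraction of a class, and thereafter the reachable left-degrees $\abs{L_j(y)}$ grow by the same factor. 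Starting from the single edge $e_0$, a constant fraction of $E(V_j,V_{j+1})$ is reached after $O(1/\gamma)$ levels; this is exactly why the hypothesis asks for $k\geq 3/\gamma$, a comfortable over-estimate since extra levels only help. A constant fraction of reachable edges forces, by averaging against the trivial bound $\abs{L_j(y)}\leq\abs{N(y)\cap V_j}$, that at least $\eps n_0$ vertices $y\in V_{j+1}$ become \emph{active}, i.e.\ satisfy $\abs{L_j(y)}\geq\eps\abs{N(y)\cap V_j}$.

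Once $\eps n_0$ active (and, by property $(ii)$, typical) vertices are available, the saturation phase runs on regularity and finishes in $O(1)$ levels, which is the role of the ``$+4$''. For an active good vertex $y$, property $(ii)$ provides that $N(y)\cap V_j$ and $N(y)\cap V_{j+2}$ span an \lreg{\eps}{p} graph, so applying \autoref{l:degreeok} to the large set $L_j(y)\subseteq N(y)\cap V_j$ shows that all but $\eps\abs{N(y)\cap V_{j+2}}$ of the right-neighbours $z$ of $y$ have a neighbour in $L_j(y)$; that is, $y$ expands to a $(1-\eps)$-fraction of its own right-edges. Feeding this into the \reg{\eps} pair $(V_{j+1},V_{j+2})$: because the active set $A_j$ has size $\geq\eps n_0$, \autoref{l:degreeok} gives that almost every $z\in V_{j+2}$ has at least $(1-\eps)\abs{A_j}\alpha p$ neighbours in $A_j$, and a Markov bound on the total number $\sum_{y\in A_j}\eps\abs{N(y)\cap V_{j+2}}$ of non-expanded active edges shows that for almost every such $z$ most of these neighbours give genuine reachable edges into $z$. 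Hence $\abs{L_{j+1}(z)}\geq(1-\eps)\abs{A_j}\alpha p$ for almost all $z$, which already makes $z$ active; iterating the same step once more promotes the active set to a $1-O(\sqrt{\eps})$ fraction of its class and raises $\abs{L(z)}$ to a $1-O(\sqrt{\eps})$ fraction of $\abs{N(z)\cap V_{j+1}}$ for almost all $z$. This is a stable configuration, so the reachable fraction at level $k+4$ is $1-O(\sqrt{\eps})>0.52$.

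The main obstacle is the overlap control in the growth phase: I must rule out that the $\geq(1-\eps)n_0p^2$ expansions of the various reachable edges collapse onto a $o(1)$-fraction of $V_{j+2}$ and thereby stall the multiplicative growth before constant density is reached. Concretely, since $\abs{W_{j+1}}\geq \big(\sum_z\abs{L_{j+1}(z)}\big)^2/\sum_z\abs{L_{j+1}(z)}^2$ by Cauchy--Schwarz, what is needed is an upper bound on the number of pairs of reachable edges whose expansions meet at a common $z$. Regularity is blind here because the reachable set is smaller than an $\eps$-fraction, so this bound must come from the randomness of $G_{n,p}$ itself, in the form of a.a.s.\ upper bounds on codegrees and on the number of vertices having atypically many neighbours inside a prescribed small set. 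Since passing to a subgraph $G$ only removes edges, and hence only decreases these codegree counts, while property $(i)$ supplies the matching lower bound on expansions, such a bound transfers to every subgraph simultaneously; establishing it uniformly over all subgraphs, all starting edges, and all index positions through a single union bound over $G_{n,p}$ is the technical heart of the lemma.
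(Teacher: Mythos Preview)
Your two-phase plan is sound and workable, but it diverges from the paper in the growth phase. You propose to track the reachable set level by level and control overlap via Cauchy--Schwarz together with a.a.s.\ codegree bounds in $G_{n,p}$; the paper instead dispatches the entire growth phase in a single global double count. Iterating property $(i)$ shows that $e_0$ lies in at least $((1-\eps)n_0p_0^2)^{k-1}$ copies of $P_k^2$ reaching $E(V_k,V_{k+1})$, while Spencer's extension theorem (\autoref{spannedPaths}) gives that a.a.s.\ no two edges of $G_{n,p}$ are joined by more than $2n^{k-3}p^{2k-3}$ such copies; dividing yields $\Theta(n^2p)$ distinct reachable edges at level $k$ without ever looking at intermediate levels. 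This sidesteps the bookkeeping your scheme needs (simultaneously tracking $|W_j|$, $|R_j|$ and $\max_y|L_j(y)|$, and the regime change at $|L_j(y)|p\approx 1$), which you correctly flag as the technical heart but do not carry out. Conversely, the constant hidden in the paper's $\Theta(n^2p)$ is only $((1-\eps)\eta\alpha^2)^{k-1}$, so the paper cannot jump straight into your regularity-based saturation: it first spends two of the four extra levels on a Chernoff-type boosting step (the $s\geq\log^2(n)\,n_0p/n^{\gamma}$ case of \autoref{triangleExpansion}) to bring the active vertex set up to $\Theta(n_0)$, and only then runs essentially your property-$(ii)$ argument for the last two levels. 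In effect you have shifted the work of those two boosting steps into a more ambitious growth phase. One minor point: no union bound over $e_0$ is needed --- once $G_{n,p}$ satisfies the relevant upper-bound properties, your argument is deterministic for every admissible subgraph $G$ and every starting edge.
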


With these two lemmas at hand we can prove our main result.
\mainTheorem*
\begin{proof}
Suppose that $\tilde G$ is a spanning subgraph of $G_{n,p}$ with $\delta(\tilde G)\geq(\frac{2}{3}+\nu)np$. We will the fix constants $\alpha(\nu), \eps(\nu,\alpha), \eps'(\nu, \alpha, \eps, \gamma)>0$ throughout the proof.

Set  $k_0=\lceil\frac{3}{\gamma}\rceil+4$ and let $r_{\min}=\max\{3k_0, n_0\}$ where $n_0$ denotes the constant from \autoref{theorem:kss}.
For $\alpha(\nu)$ small enough we may invoke \autoref{l:standard} with $\mu\leftarrow 2/3$, $\nu\leftarrow \nu$,  $\eps\leftarrow\eps'$ to obtain, for some $r\in[r_{\min}, r_{\max}(\nu,\eps')]$, a partition $V=V_0\cup\dots\cup V_r$ of the vertices of $\tilde G$ such that $\abs{V_1}=\dots=\abs{V_r}=\tilde n\in[(1-\eps')n/r,n/r]$ and for every $i\in\set{r}$ the number of indices $j\in\set{r}\setminus\{i\}$ such that $\tilde G[V_i, V_j]$ contains a spanning \reg{\eps'} subgraph with $\floor{\tilde n^2\alpha p}$ edges is at least $\frac{2}{3}r$.

Since $r\geq n_0$ \autoref{theorem:kss} tells us that then $\tilde G$ must contain a subgraph $G\in \G{C^2_r}{\tilde n}{\alpha p}{\eps'}$ on the partitions $V_1,\dots,V_r$. We shall assume wlog.\ that $V_i$ represents the $i$-th vertex of the cycle. Furthermore for ease of notation we identify $V_{r+i}$ with $V_{i}$.

For integers $i\in\set{r}$ and $t\in \ccint{k_0}{2k_0}$ consider a collection of
subsets $V'_i\subseteq V_i,\dots$, $V'_{i+t}\subseteq V_{i+t}$ each of size $n'\geq\eps \tilde n$. By definition of \reg{\eps'}ity the sets $V'_{i},\dots,V'_{i+t}$ induce a subgraph $G'\in\G{P^2_t}{n'}{\alpha p}{2\eps'/\eps}$ in $G$. 
By \autoref{lemma:gtilde} we may pick $\eps'$ small enough depending on $\alpha,\eps,2k_0$ such that a.a.s.\ $G'$ contains a spanning subgraph $G_0\subseteq G'$ with $G_0 \in \tildeG{P^2_t}{n'}{\alpha p}{\eps}$.

We call an edge $e\in E(V'_i,V'_{i+1})$ \emph{good} (w.r.t.\ $V'_i,\dots,V'_{i+t}$) if it expands to
at least a $0.51$-fraction of the edges in $E(V'_{i+t-1}, V'_{i+t})$ (through $V'_i,\dots,V'_{i+t}$ in $G_0$).
\autoref{pathExpansion} with $\eta\leftarrow \eps/2r_{\max}$,
$\alpha\leftarrow \alpha$, $\gamma\leftarrow \gamma$, $k\leftarrow t-4\geq k_0-4 \geq \frac{\gamma}{3}$ tells us that all edges in
$G_0[V'_i,V'_{i+1}]$ expand to at least a $0.52$-fraction of the edges in $G_0[V'_{i+t-1},V'_{i+t}]$.
Since $G_0\subseteq G$ and since the density of $G_0$ and $G$ differs by at most $2\eps\alpha p$
this implies that say a $0.99$-fraction of the edges in  $E(V'_i,V'_{i+1})$ are good.

We can now find a long square of a path as follows: Fix an edge $e\in E(V_1,V_2)$ which is good with respect to
$V_1,\dots,V_{k_0+1}$. $e$ expands to a $0.51$ fraction of the edges spanned by $V_{k_0}, V_{k_0+1}$ and a
$0.99$ fraction of the edges in $E(V_{k_0}, V_{k_0+1})$ are good with respect to $V_{k_0},\dots,V_{2k_0}$. Thus
we may fix a square of a path $P\subseteq G[V_1\cup\dots\cup V_{k_0+1}]$ of length $k_0$ from $e$ to some
edge edge $e'\in E(V_{k_0},V_{k_0+1})$ which is good with respect to $V_{k_0},\dots,V_{2k_0}$.

Now remove $V(P)\setminus e'$ from $G$. Observe that since $r\geq 3k_0$  we did not remove any vertices from
$V_{k_0},\dots, V_{2k_0}$ and therefore $e'$ is still good (w.r.t.\ $V_{k_0},\dots, V_{2k_0}$). Thus we may extend
$P$ to end in an edge $e''\in E(V_{2k_0-1},V_{2k_0})$ which is good w.r.t.\ to $V_{2k_0-1},\dots,V_{3k_0-1}$. This
procedure can be continued for as long as $\abs{V_i}\geq \eps\tilde n$ for every $i\in\set{r}$. Thus we obtain a square of a path $P$ which uses at least $(1-2\eps)\tilde n$ vertices from each partition of $G$.

This construction can be generalized to obtain a square of the cycle: before fixing the first edge
$e$ set aside sets $\tilde V_1\subseteq V_1$, \dots, $\tilde V_{r}\subseteq V_r$ each of size $\eps\tilde n$.
Pick $e\in E(\tilde V_1, \tilde V_2)$ such that it expands (backwards) to at least a
$0.51$-fraction of the edges in $E(\tilde V_{r+1-k_0},\tilde V_{r+2-k_0})$. Then embed a long
path starting with $e$ in $V(G)\setminus \bigcup \tilde V_i$. At any point we may decide to close it
by picking the next edge such that it expands to a $0.51$-fraction of the edges in
$E(\tilde V_{r+1-k_0},\tilde V_{r+2-k_0})$.
With this construction we find a square of a cycle of length at least
\[
r\cdot(1-3\eps)\tilde n\geq (1-3\eps) (1-\eps')\frac{n}{r}\geq (1-\mu)n
\] provided
that $\eps', \eps$ are chosen small enough depending on $\mu$.
\end{proof}

\section{Proof of \autoref{lemma:gtilde}}\label{sec:gtilde}

For the proof of \autoref{lemma:gtilde} we need the following sparse
regularity inheritance theorem:
\begin{theorem}[Gerke, Kohayakawa, R\"{o}dl, and Steger~\cite{GKRS07}]\label{inheritance}
For $0<\beta,\eps'<1$, there exists $\eps_0=\eps_0(\beta,\eps')>0$ and
$C=C(\eps')$ such that, for any $0<\eps\leq\eps_0$ and $0<p<1$ every
\lreg{\eps}{p} graph $G=(V_1\cup V_2,E)$ satisfies that, for every $q_1,q_2\geq
Cp^{-1}$, the number of pairs of sets $(Q_1, Q_2)$ with $Q_i\subseteq V_i$ and
$\abs{Q_i}=q_i$ ($i=1,2$) that induce an \lreg{\eps'}{p} graph is at least
\[\left(1-\beta^{\min\left\{q_1,q_2\right\}}\right)\binom{\abs{V_1}}{q_1}\binom{\abs{V_2}}{q_2}.\]
\end{theorem}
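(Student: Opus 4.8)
The plan is to recast the conclusion probabilistically. Drawing $Q_1,Q_2$ uniformly at random among the $q_1$- and $q_2$-subsets of $V_1,V_2$, the number of non-regular pairs is exactly $\binom{\abs{V_1}}{q_1}\binom{\abs{V_2}}{q_2}$ times the probability $\pi$ that the random pair fails to be \lreg{\eps'}{p}, so it suffices to prove $\pi\leq\beta^{\min\{q_1,q_2\}}$. Assume without loss of generality that $q_1\leq q_2$. The first step is to reduce lower-regularity to a one-sided degree condition in the spirit of \autoref{l:degreeok}: the pair is \lreg{\eps'}{p} as soon as, for every $S_1\subseteq Q_1$ with $\abs{S_1}\geq\eps' q_1$, at most $\tfrac{{\eps'}^2}{2}q_2$ vertices $w\in Q_2$ are \emph{poor for $S_1$}, meaning $\abs{N(w)\cap S_1}<(1-\eps'/2)p\abs{S_1}$. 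Indeed, for any $S_2$ with $\abs{S_2}\geq\eps' q_2$ the poor vertices then make up at most an $\eps'/2$-fraction of $S_2$, and summing the degrees of the remaining vertices of $S_2$ gives $e(S_1,S_2)\geq(1-\eps')p\abs{S_1}\abs{S_2}$. Everything therefore reduces to controlling, simultaneously over all admissible $S_1\subseteq Q_1$, how many poor vertices land in $Q_2$.

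The second step is the single-vertex estimate feeding this reduction. By \autoref{l:degreeok} applied with $(U,W,W')=(V_2,V_1,V_1)$, all but $\eps\abs{V_2}$ vertices $w$ satisfy $\abs{N(w)}\geq(1-\eps)p\abs{V_1}$. For such a $w$ and a uniformly random $S_1$ of size $s_1\geq\eps'q_1\geq\eps'Cp^{-1}$, the quantity $\abs{N(w)\cap S_1}$ is hypergeometric with mean at least $(1-\eps)ps_1$, so a Chernoff bound for sampling without replacement shows that the probability that $w$ is poor for $S_1$ is at most $\exp(-c{\eps'}^2 ps_1)\leq\exp(-c{\eps'}^3 C)$. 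Taking $\eps\leq\eps_0(\beta,\eps')$ small and $C=C(\eps')$ large, this estimate together with the $\eps\abs{V_2}$ exceptional vertices forces the expected size of the poor set $P(S_1):=\{w\in V_2: w\text{ poor for }S_1\}$ to be at most $\theta\abs{V_2}$ for a parameter $\theta$ that we may drive to $0$.

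The main obstacle is the universal quantifier over $S_1\subseteq Q_1$: there are up to $2^{q_1}$ such sets; an adversarially chosen \emph{small} $S_1$ can genuinely leave a constant fraction of $V_2$ poor, since lower-regularity says nothing about degrees into sets smaller than $\eps\abs{V_1}$; and a mere first-moment bound on $\abs{P(S_1)}$ is only polynomially small and cannot survive a union bound over exponentially many witnesses. I therefore need the \emph{doubly}-exponential concentration statement that, for a uniformly random $s_1$-set, the probability that $\abs{P(S_1)}$ exceeds $\theta\abs{V_2}$ is at most $\exp(-\Omega(s_1\log(1/\theta)))$. This is the decisive difficulty, and it lies beyond the reach of the elementary lemmas: the events $\{w\in P(S_1)\}$ are \emph{positively} correlated across $w$ (enlarging the intersection of $S_1$ with one neighbourhood tends to enlarge others), so one cannot simply multiply single-vertex probabilities; and bounding the event through $e(S_1,P(S_1))<(1-\eps'/2)p\abs{S_1}\abs{P(S_1)}$ together with a union over the large co-poor sets $P(S_1)\subseteq V_2$ fails, because the entropy $\binom{\abs{V_2}}{\theta\abs{V_2}}$ of such sets far exceeds the available $\exp(-\Omega(s_1))$ saving. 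Overcoming this — via a dedicated higher-moment or martingale-type control of the correlations among the poorness events, which is where the hypothesis $s_1\geq\eps'Cp^{-1}$ is genuinely spent — is the heart of the theorem.

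Granting that concentration, the argument closes cleanly. Call an ambient set $S_1\subseteq V_1$ \emph{degenerate} if $\abs{P(S_1)}>\theta\abs{V_2}$. For $s_1\geq\eps\abs{V_1}$ lower-regularity applied directly to $(S_1,P(S_1))$ forbids degeneracy, while for smaller $s_1$ a first-moment bound suffices: writing $\tau_{s_1}$ for the fraction of degenerate $s_1$-sets, the probability that $Q_1$ contains some degenerate $s_1$-set is at most $\tau_{s_1}\binom{\abs{V_1}}{s_1}\cdot\binom{q_1}{s_1}/\binom{\abs{V_1}}{s_1}=\tau_{s_1}\binom{q_1}{s_1}\leq\tau_{s_1}2^{q_1}$, and summing the doubly-exponential bound $\tau_{s_1}\leq\exp(-\Omega(\eps'q_1\log(1/\theta)))$ over $s_1$ shows that $Q_1$ contains no degenerate subset of size $\geq\eps'q_1$ except with probability at most $\tfrac12\beta^{q_1}$. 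On the complementary event every admissible $S_1\subseteq Q_1$ has at most $\theta\abs{V_2}$ poor vertices in all of $V_2$, so the number landing in the independently random set $Q_2$ is hypergeometric with mean at most $\theta q_2$; a Chernoff bound makes it exceed $\tfrac{{\eps'}^2}{2}q_2$ with probability at most $\exp(-c{\eps'}^2 q_2\log(1/\theta))$, and a union bound over the at most $2^{q_1}\leq 2^{q_2}$ sets $S_1$ costs only a factor $2^{q_2}$. Since $\theta$ is small, $c{\eps'}^2\log(1/\theta)$ exceeds $\log 4$, and using $q_2\geq q_1$ the two contributions combine to at most $\beta^{q_1}=\beta^{\min\{q_1,q_2\}}$, with the dependencies $\eps_0=\eps_0(\beta,\eps')$ and $C=C(\eps')$ exactly as asserted.
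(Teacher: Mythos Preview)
The paper does not prove this theorem at all: it is quoted verbatim as a black-box tool from Gerke, Kohayakawa, R\"{o}dl, and Steger~\cite{GKRS07} and then applied in the proof of \autoref{lregnb}. There is therefore no ``paper's own proof'' to compare your attempt against.

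As for the attempt itself, you correctly identify the reduction to a one-sided degree criterion and the need for a doubly-exponential bound on the probability that a random $s_1$-set $S_1$ has a large poor set $P(S_1)$. But you then write ``Overcoming this \ldots\ is the heart of the theorem'' and ``Granting that concentration, the argument closes cleanly'' --- which is to say, you have located the crux and skipped it. The step you are missing is genuinely the entire content of the cited result: the original proof in \cite{GKRS07} establishes precisely this concentration, and it is not a routine Chernoff or Azuma application (as you yourself note, the poorness events are positively correlated and the na\"{\i}ve union bound over candidate poor sets blows up). Without that argument, what you have written is an outline of why the theorem is plausible and where the difficulty lies, not a proof.
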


From \autoref{inheritance} we easily deduce a bound on the number of graphs in $\G{K_3}{n}{p}{\eps_0}$ for which there exist `many' vertices in $V_1$ whose neighborhood does not induce a lower regular graph of `roughly' the expected size. 
\begin{lemma}\label{lregnb}
For every $\beta, \eps>0$ there exists $\eps_0>0$ and $C>0$ such that for all integers $n,m\in\N$  that satisfy
$3^nn^{2n}\le 2^{n^{3/2}}$ and  $m\geq C n^{3/2}$
the following holds. The number of graphs $G=(V_1\cup V_2\cup V_3, E)$ in $\G{K_3}{n}{m/n^2}{\eps_0}$ with $m$ edges between each two partitions for which more than $\eps n$ vertices in $v\in V_1$ have neighborhoods in $V_2$, $V_3$ which are not of size $(1\pm \eps)m/n$ or which do not induce an \lreg{\eps}{m/n^2} subgraph in $G[V_2,V_3]$ is at most
\[
\beta^{m}\binom{n^2}{m}^3.
\]
\end{lemma}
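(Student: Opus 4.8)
The plan is to reduce the statement to a single ``bad event'' per vertex of $V_1$ and then apply the inheritance theorem (\autoref{inheritance}) to the bipartite graph $H:=G[V_2,V_3]$, which is \reg{\eps_0} by membership in $\G{K_3}{n}{m/n^2}{\eps_0}$. Write $p=m/n^2$ and split the bad vertices of $V_1$ into two types: a vertex $v$ is of \emph{Type A} if one of $N_2(v),N_3(v)$ fails to have size $(1\pm\eps)m/n$, and of \emph{Type B} if both have the right size but $H[N_2(v),N_3(v)]$ is not \lreg{\eps}{p}. Since $G[V_1,V_2]$ and $G[V_1,V_3]$ are \reg{\eps_0}, \autoref{l:degreeok} applied with $W'=V_2$ (resp.\ $V_3$) shows that at most $4\eps_0 n$ vertices are of Type A; choosing $\eps_0\le\eps/8$ this is below $\eps n/2$. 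Hence any graph in the class with more than $\eps n$ bad vertices has at least $\eps n/2$ Type\,B vertices, and it suffices to bound the number of graphs (with $m$ edges between each pair and with $H$ \reg{\eps_0}) that possess a set $S\subseteq V_1$, $|S|=s:=\ceil{\eps n/2}$, consisting of Type\,B vertices.

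First I fix $H$ and $S$. As $H$ is \reg{\eps_0} of density $(1\pm\eps_0)p$ it is \lreg{2\eps_0}{p}, so for $\eps_0$ small enough \autoref{inheritance} applies to $H$ with target regularity $\eps'\leftarrow\eps$ and some decay $\beta_0$ to be chosen: for every fixed pair of sizes $q_2,q_3\ge C(\eps)p^{-1}$ the number of pairs $(Q_2,Q_3)$, $Q_i\subseteq V_{i+1}$ of size $q_i$, that do \emph{not} induce an \lreg{\eps}{p} subgraph is at most $\beta_0^{\min\{q_2,q_3\}}\binom{n}{q_2}\binom{n}{q_3}$. A Type\,B vertex $v$ has $|N_2(v)|,|N_3(v)|\ge(1-\eps)m/n$, and $(1-\eps)m/n\ge C(\eps)p^{-1}=C(\eps)n^2/m$ once $m\ge Cn^{3/2}$ with $C$ large (equivalently $p^2n\ge C$). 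Thus for each $v\in S$ with prescribed degrees $a_v=|N_2(v)|$, $b_v=|N_3(v)|$ there are at most $\beta_0^{(1-\eps)m/n}\binom{n}{a_v}\binom{n}{b_v}$ admissible neighbourhood pairs.

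Now I count. Fixing also the full degree sequences $(a_v)_v,(b_v)_v$ with $\sum_v a_v=\sum_v b_v=m$, the number of ways to choose all neighbourhoods so that every $v\in S$ is Type\,B is at most
\[
\prod_{v\in S}\beta_0^{(1-\eps)m/n}\binom{n}{a_v}\binom{n}{b_v}\prod_{v\notin S}\binom{n}{a_v}\binom{n}{b_v}=\beta_0^{(1-\eps)ms/n}\prod_{v}\binom{n}{a_v}\binom{n}{b_v}\le \beta_0^{(1-\eps)ms/n}\binom{n^2}{m}^2,
\]
where the last step uses $\prod_v\binom{n}{a_v}\le\binom{n^2}{m}$, valid for any single degree sequence with $\sum_v a_v=m$. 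Summing this over the at most $2^n$ choices of $S$, the at most $(n+1)^{2n}$ degree sequences, and the at most $\binom{n^2}{m}$ choices of $H$, and using $s\ge\eps n/2$, bounds the number of bad graphs by
\[
3^n n^{2n}\,\beta_0^{(1-\eps)\eps m/2}\binom{n^2}{m}^3\le 2^{n^{3/2}}\beta_0^{(1-\eps)\eps m/2}\binom{n^2}{m}^3,
\]
the overhead $2^n(n+1)^{2n}\le 3^n n^{2n}$ being absorbed (with room to spare) by the hypothesis $3^n n^{2n}\le 2^{n^{3/2}}$. Finally $m\ge Cn^{3/2}$ gives $2^{n^{3/2}}\le 2^{m/C}$, so choosing the inheritance decay $\beta_0$ small enough (which through \autoref{inheritance} fixes the admissible $\eps_0$) makes $2^{m/C}\beta_0^{(1-\eps)\eps m/2}\le\beta^m$, as required.

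The main obstacle is the global constraint that each of the three bipartite graphs carries exactly $m$ edges, which couples the per-vertex choices and, for Type\,B vertices, couples $N_2(v)$ with $N_3(v)$. Conditioning on the degree sequences decouples the vertices, while the identity $\sum_{\sum_v a_v=m}\prod_v\binom{n}{a_v}=\binom{n^2}{m}$ keeps the bound from leaking, and the coupling of $N_2(v)$ with $N_3(v)$ is exactly what \autoref{inheritance} is designed to control. The only quantitative point to watch is that the inheritance size requirement $q_i\ge C(\eps)p^{-1}$ matches the neighbourhood sizes $\sim m/n$, which is precisely why the density hypothesis $m\ge Cn^{3/2}$ is imposed.
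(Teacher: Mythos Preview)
Your proof is correct and follows essentially the same approach as the paper's: split the bad vertices into those with wrong degree (bounded via \autoref{l:degreeok}) and those whose neighbourhoods fail lower-regularity, count the latter by fixing $H$, the bad set, and the degree sequences, apply \autoref{inheritance} vertex by vertex, and close with the single-term Vandermonde bound $\prod_v\binom{n}{a_v}\le\binom{n^2}{m}$. The only differences are cosmetic (you fix a subset $S$ of fixed size rather than the full three-way partition $V_D\cup V_B\cup V_G$, and your bookkeeping constants differ slightly), and your final absorption of the $2^{n^{3/2}}$ overhead via $\beta_0$ is equivalent to the paper's choice $\beta_0^{(1-\eps)\eps/2}=\beta/2$ together with $2^{n^{3/2}}\le 2^m$.
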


\begin{proof}
Write $p=m/n^2$ and define $\beta_0$ by $\beta_0^{(1-\eps)\eps/2}=\beta/2$.
Let $\eps_0=\min\left\{\eps/4,\eps_0(\beta_0,\eps)\right\}$ and $C=\max\left\{1,C(\eps)/(1-\eps)\right\}$, where $\eps_0(\cdot,\cdot)$
and $C(\cdot)$ are the functions given by \autoref{inheritance}.

Consider a graph $G=(V_1\cup V_2\cup V_3,E)$ for which the statement fails. For
any such graph we may partition $V_1$ into three sets $V_D, V_B, V_G\subseteq
V_1$ as follows: the set $V_D$ contains all vertices whose degree into at least one of
$V_2$ or $V_3$ is not within $(1\pm\eps)np$. $V_B$ contains all vertices whose
degree into both $V_2$ and $V_3$ is within $(1\pm\eps)np$ but whose
neighborhoods in $V_2$ and $V_3$ do not induce an \lreg{\eps}{p} graph in $G[V_2,V_3]$.
Finally set $V_G=V_1\setminus \left(V_D\cup V_B\right)$. \autoref{l:degreeok} implies that 
$\abs{V_D}\leq 2\eps_0 n \leq \eps n/2$. Therefore it suffices to enumerate graphs $G$ with
$\abs{V_{B}}\geq \eps n/2$.

We now construct all graphs $G$ which produce a partition with
$\abs{V_B}\geq \eps n/2$. First we pick the \reg{\eps_0} graph spanned by $V_2$, $V_3$.
There are at most $\binom{n^2}{m}$ choices.

Second we pick a partition $V=V_D\cup V_B \cup V_G$ and fix the degrees of all vertices $v\in V_1$ in $G[V_1,V_2]$ and $G[V_1,V_3]$. The number of choices is at most $3^n\cdot n^{2n}$. Finally we fix the actual
neighborhoods of the vertices of $V_1$. For a vertex $v\in V_D \cup V_G$ the
number of choices is at most
\[
\binom{n}{\deg_{G[V_1,V_2]}(v)}\binom{n}{\deg_{G[V_1,V_3]}(v)}.
\]
For $v\in V_B$ we have to select a neighborhood which does not induce an
\lreg{\eps}{p} subgraph in $G[V_2,V_3]$. Since $\deg_{G[V_1,V_i]}(v)\geq (1-\eps)np
\geq C(\eps)p^{-1}$ \autoref{inheritance} tells us that the number of such
neighborhoods is at most
\[
\beta_0^{(1-\eps)np}\binom{n}{\deg_{G[V_1,V_2]}(v)}\binom{n}{\deg_{G[V_1,V_3]}(v)}.
\]
Since $\abs{V_B} \geq \eps n/2$ the total number of choices for the
neighborhoods of the vertices in $V_1$ is bounded by
\[
\beta_0^{(1-\eps)np\abs{V_B}}\prod_{v\in V_1}\binom{n}{\deg_{G[V_1,V_2]}(v)}\binom{n}{\deg_{G[V_1,V_3]}(v)}
\leq \beta_0^{(1-\eps)\eps m/2} \binom{n^2}{m}^2
= \left(\frac{\beta}{2}\right)^{m}\binom{n^2}{m}^2,
\]
where the inequality follows from Vandermonde's identity. 
Since $3^n\cdot n^{2n} \leq 2^{n^{3/2}} \leq  2^m$ by assumption on $n$ and $m$, this  completes the proof.
\end{proof}

\autoref{lregnb} immediately implies the following corollary about the number of triangles spanned by almost all edges:
\begin{corollary}\label{triangles}
For every $\beta, \delta>0$ there exists $\eps_0>0$ and $C>0$ such that for $m\geq C n^{3/2}$ and $n$ sufficiently large 
the following holds. The number of graphs $G=(V_1\cup V_2\cup V_3, E)$ in $\G{K_3}{n}{m/n^2}{\eps_0}$, with $m$ edges between each two partitions, for which more than $\delta m$ edges of $G[V_1,V_2]$ are contained in fewer than $(1-\delta)n(m/n^2)^2$ triangles is at most
\[
\beta^{m}\binom{n^2}{m}^3.
\]
\end{corollary}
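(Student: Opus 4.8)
The plan is to deduce the corollary from \autoref{lregnb} by translating its statement about vertices with regular neighbourhoods into one about edges lying in many triangles, and then argue by contraposition. Given $\beta,\delta>0$, I would first fix $\eps>0$ small enough that $(1-\eps)^2\geq 1-\delta$ and $\eps(1+\eps)\leq\delta/4$, and apply \autoref{lregnb} with this $\eps$ (and the same $\beta$) to obtain $\eps_0$ and $C$; I would additionally shrink $\eps_0$ so that $\eps_0(1+\eps_0)\leq\delta/4$. Writing $p=m/n^2$, note that a graph $G\in\G{K_3}{n}{p}{\eps_0}$ has density exactly $p$ between each pair, so each $G[V_i,V_j]$ is \reg{\eps_0}. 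Call a vertex $v\in V_1$ \emph{nice} if its neighbourhoods have size $(1\pm\eps)np$ in both $V_2,V_3$ and induce an \lreg{\eps}{p} subgraph in $G[V_2,V_3]$. By \autoref{lregnb} all but at most $\beta^m\binom{n^2}{m}^3$ graphs have at most $\eps n$ non-nice vertices in $V_1$, so it suffices to show that any such graph has fewer than $\delta m$ edges of $G[V_1,V_2]$ lying in fewer than $(1-\delta)np^2$ triangles; the corollary then follows by contraposition.

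For the main estimate I would exploit that a triangle through an edge $\{v,u\}$ with $v\in V_1$, $u\in V_2$ corresponds exactly to a common neighbour in $V_3$, i.e.\ to a neighbour of $u$ inside $N_3(v)$, counted in the bipartite graph $G[N_2(v),N_3(v)]$. For a nice vertex $v$ this graph is \lreg{\eps}{p} with both sides of size at least $(1-\eps)np$, so applying \autoref{l:degreeok} with $W'=N_3(v)$ shows that all but at most $\eps\abs{N_2(v)}\leq\eps(1+\eps)np$ of the edges incident to $v$ lie in at least $(1-\eps)\abs{N_3(v)}p\geq(1-\eps)^2np^2\geq(1-\delta)np^2$ triangles. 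Summing over the at most $n$ nice vertices bounds their contribution to the bad edges by $\eps(1+\eps)m$.

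What remains is to bound the bad edges incident to the at most $\eps n$ non-nice vertices, and here lies the main obstacle: a vertex whose degree into $V_2$ is abnormally large can be incident to as many as $n$ edges, so a per-vertex bound is far too lossy and one must instead bound the edges collectively using regularity. I would split the non-nice vertices by their degree into $V_2$. For the set $D^+$ of vertices with $\deg_{G[V_1,V_2]}>(1+\eps)np$, \autoref{l:degreeok} gives $\abs{D^+}\leq 2\eps_0 n$ (since $\eps_0<\eps$), and rather than summing degrees I would bound $\abs{E(D^+,V_2)}$ directly: applying \reg{\eps_0}ity of $G[V_1,V_2]$ to $D^+$ (padding it to size $\lceil\eps_0 n\rceil$ first if it is smaller, which only increases the edge count) yields $\abs{E(D^+,V_2)}\leq(1+\eps_0)p\cdot 2\eps_0 n\cdot n=2\eps_0(1+\eps_0)m$. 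The remaining non-nice vertices have degree at most $(1+\eps)np$ into $V_2$ and number at most $\eps n$, contributing at most $\eps(1+\eps)m$ edges. Altogether the number of bad edges is at most $\bigl(2\eps(1+\eps)+2\eps_0(1+\eps_0)\bigr)m<\delta m$ by the choice of $\eps$ and $\eps_0$, which establishes the required implication and hence the corollary.
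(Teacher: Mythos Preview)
Your proof is correct and follows the same reduction as the paper: invoke \autoref{lregnb} to bound the number of graphs with many non-``nice'' vertices in $V_1$, and then show that when almost all vertices are nice, few edges lie in too few triangles. The use of \autoref{l:degreeok} on the induced \lreg{\eps}{p} graph $G[N_2(v),N_3(v)]$ to get the triangle count per edge is exactly what the paper does.

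Where you diverge is in the bookkeeping. You bound the bad edges directly, splitting into contributions from nice and non-nice vertices; this forces you to confront the possibility that a non-nice vertex has abnormally high degree into $V_2$, whence the detour through $D^+$ and the padding-plus-regularity argument. The paper sidesteps all of this by counting in the complementary direction: every nice vertex $v$ is incident to at least $(1-\eps)^2 np$ \emph{good} edges, so the number of good edges is at least $|V_1'|\,(1-\eps)^2 np$, and hence the number of bad edges is at most $m-|V_1'|(1-\eps)^2 np$. With $|V_1'|\geq(1-\eps)n$ and $(1-\eps)^3\geq 1-\delta$ this is at most $\delta m$ immediately, with no need to control edges at non-nice vertices at all. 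Your route works, but the subtraction trick is shorter and avoids the case analysis.
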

\begin{proof}
Let $p=m/n^2$ and choose $\eps$ small enough for $(1-\eps)^3\geq (1-\delta)$ to hold. Denote with $V'_1\subseteq V_1$ the set of vertices $v\in V_1$ whose neighborhoods in $V_2, V_3$ are of size $(1\pm \eps)np$ and induce an \lreg{\eps}{p} subgraph in $G[V_1, V_2]$. From \autoref{l:degreeok} we deduce that every vertex in $V_1'$ is incident to at least $(1-\eps)^2np$ edges (with endpoint in $V_2$) which are each contained in at least $(1-\eps)^2np^2\geq(1-\delta)np^2$ triangles. Therefore, the total number of edges which are contained in  fewer than $(1-\delta)np^2$ triangles is at most $m-\abs{V'_1}(1-\eps)^2np$. By choice of $\delta$, the only possibility that the desired condition is not fulfilled is thus that  $\abs{V'_1}\leq (1-\eps)n$. \autoref{lregnb} handles exactly this case -- and thus concludes the proof if we choose $C$ and $\eps_0$ as in this lemma.
\end{proof}

With \autoref{triangles} at hand we are now ready to prove \autoref{lemma:gtilde}. 

\begin{proof}[Proof of \autoref{lemma:gtilde}]
Observe first that it suffices to consider a fixed integer $k\leq k_{\max}$, as the lemma then follows by choosing the minimum $\eps'$ for all $k\leq k_{\max}$ and a trivial union bound argument. 

We first consider property $(i)$ of \autoref{def:gtilde}. The key fact here is that we want the property to hold for {\em every} edge, while
\autoref{triangles} guarantees this only for {\em `almost all'} edges. It is thus obvious what we need to do: show that we can remove edges appropriately. To this end \autoref{l:subgraphok} will come in very handy, as it shows that if we choose $\eps'$ small enough then we can take away edges repeatedly, while still keeping some regularity properties. 

Define constants as follows: $\eps_0(\alpha, \eps)$ will be fixed at the end of the proof, but will be small enough to satisfy $(1-\eps)\leq (1-\eps_0)^3$. Set $\beta = (\alpha/(4e))^3$ and for $i\in\set{k-1}$ define $\delta_i = (\eps_{i-1}/4)^4/2$ and $\eps_i = \min\left\{\delta_i/4, \eps_{cor}\left(\beta,\delta_i\right)\right\}$, where $\eps_{cor}(\cdot,\cdot)$ denotes the function $\eps(\beta,\delta)$ defined in \autoref{triangles}. Finally define $m_i=\ceil{(1-\eps_i)n^2_0p_0}$ and $\eps'=\eps_{k-1}/4$. Observe that $\eps_0>\delta_1>\eps_1>\dots>\delta_{k-1}>\eps_{k-1}>\eps'$.

We now proceed as follows: for $i=k-1$ down to $i=1$ we remove edges from $E(V_{i},V_{i+1})$ if they are not contained in enough triangles with $V_{i+2}$ {\em with respect} to the edge set  that survived the removal process in the previous round. That is, we remove all edges in $E(V_i,V_{i+1})$ which are contained in fewer than $(1-\eps)n_0p_0^2$ triangles with $V_{i+2}$, only taking in account edges that are still present.

Assume first that  for all $1\le i < k$ we remove at most $2\delta_i m_i$ edges. Then
the resulting subgraph $\tilde G$ is, by construction, such that the graph satisfies property $(i)$ of \autoref{def:gtilde}. We claim that we also have that all pairs are \reg{\eps_0} with density at least $(1-\eps_0)p_0$. Note that this implies that $\tilde G\in  G(P^2_{k}, n_0, p_0, \eps_0)$.
By definition of $\tildeG{P^2_k}{n_0}{p_0}{\eps'}$ we know that (before removing any edges) the pair  $(V_{i},V_{i+1})$ is \reg{\eps'} (and thus \reg{\eps_0/2}) with density at least $(1-\eps')p_0$. If we remove at most $2\delta_i m_i \leq (\eps_0/2)^4\abs{E(V_i,V_{i+1)}}$ edges from $E(V_{i},V_{i+1})$, then \autoref{l:subgraphok} implies that the remaining graph is 
 \reg{\eps_0} with density at least $(1-\eps'-2\delta_i)p_0 \geq (1-3\delta_i)p_0\geq (1-\eps_0) p_0$.

So assume the above condition does not hold. Let  $i$ denote the largest $1\le i< k$ such that when processing $E(V_i,V_{i+1})$ we  have to remove more than $2\delta_i m_i$ edges. We claim that then $G[V_i\cup V_{i+1}\cup V_{i+2}]$ contains one of the subgraphs enumerated by \autoref{triangles}.

Indeed, let $G'_{i+1,i+2}\subseteq G[V_{i+1},V_{i+2}]$ denote the subgraph obtained after removing the edges which do not satisfy property (i).
If $i=k-1$ then, since we do not touch the last partition, we have $G'_{i+1,i+2}=G[V_{k},V_{k+1}]$ which is trivially \reg{\eps_i/2} with density at least $(1-\eps_i)p_0$.
If $i<k-1$ then by maximality of $i$ the graph $G'_{i+1,i+2}$ is obtained by removing at most $2\delta_{i+1} m_{i+1}\leq (\eps_i/4)^4\abs{E(G[V_{i+1}, V_{i+2}])}$ edges from the \reg{\eps_i/4} graph $G[V_{i+1},V_{i+2}]$.
Therefore by \autoref{l:subgraphok} $G'_{i+1,i+2}$ is \reg{\eps_i/2} with density at least $(1-\eps'-2\delta_{i+1})p_0\geq(1-\eps_i)p_0$.
Let $G_{i,i+1}\subseteq G[V_{i},V_{i+1}]$, $G_{i,i+2}\subseteq G[V_{i},V_{i+2}]$, $G_{i+1,i+2}\subseteq G'_{i+1,i+2}$ denote spanning \reg{\eps_i} subgraphs with exactly $m_i$ edges each (for $m_i\gg n_0$ and $\eps'\leq \eps_i/2$ such subgraphs always exists, see Lemma 4.3 in \cite{GerkeSteger05}). Observe that to obtain $G_{i,i+1}$ we removed at most $2\eps_i n_0^2p_0\leq \delta_i m_i$ edges from $G[V_{i},V_{i+1}]$.

Thus by choice of $i$ there have to be at least $\delta_i m_i$ more edges in $G_{i,i+1}$ which are each contained in fewer than $(1-\eps)n_0p_0^2$ triangles with $V_{i+2}$ in $G_i \coloneqq G_{i,i+1}\cup G_{i,i+2}\cup G_{i+1,i+2}$.
Since $(1-\eps)n_0p_0^2\leq(1-\eps_0)^3n_0p_0^2\leq (1-\delta_i)n_0(m_i/n_0^2)^2$ and $\eps_i\leq \eps_{cor}\left((\alpha/(4e))^3,\delta_i\right)$ we may apply \autoref{triangles} (with $\beta\leftarrow \beta = (\alpha/(4e))^3$, $\delta\leftarrow \delta_i$, $m\leftarrow m_i$) to conclude that $G_i$ must be one of at most
\[
\beta^{m_i}\binom{n_0^2}{m_i}^3,
\]
graphs enumerated by \autoref{triangles}.
The probability that $G_{n,p}$ contains any of these graphs as a subgraph is at most
\[
{n\choose n_0}^3\cdot \beta^{m_i}\binom{n_0^2}{m_i}^3\cdot p^{3m_i}
\leq 2^{3n} \beta^{m_i}\left(\frac{en_0^2p}{m_i}\right)^{3m_i}
\leq 2^{3n}\beta^{m_i} \left(\frac{2e}{a}\right)^{3m_i} =  2^{3n-3m_i}.
\]
Since $n_0\geq \eta n$ we have $m_i\gg n$ and may additionally union bound over all choices for $n_0$ and conclude that a.a.s.\ no such
graph appears in $G_{n,p}$. In particular our procedure never removes more than $2\delta_i m_i$ edges and produces a graph
$\tilde G\in\G{P^2_k}{n_0}{p_0}{\eps_0}$ which satisfies property $(i)$.

It remains to show that additionally $\tilde G$ also satisfies property (ii). Fix $\eps_0=\min\left\{\eps/3,\eps_{lem}\left(\beta,\eps/2\right)\right\}$, where $\eps_{lem}(\cdot,\cdot)$ denotes the function $\eps(\beta,\eps)$ defined in \autoref{lregnb}. This choice is valid since $(1-\eps)\leq (1-(\eps/3))^3$.
Suppose that $\tilde G$ fails property (ii) for some $i\in\set{k-1}$. We claim that then $\tilde G[V_{i}\cup V_{i+1}\cup V_{i+2}]$ must
contain a subgraph enumerated by \autoref{lregnb}. To this end let $\tilde G'\subseteq \tilde G[V_{i}\cup V_{i+1}\cup V_{i+2}]$ denote a spanning subgraph in which every partition
contains exactly $m_0=\ceil{(1-\eps_0)n^2_0p_0}$ edges and is \reg{2\eps_0} (as before see Lemma 4.3 in \cite{GerkeSteger05}).

For $\eps_0\leq \eps/2$ we have $(1-\eps/2)m_0/n_0^2\geq (1-\eps)p$ and thus every \lreg{\eps/2, m_0/n_0^2} graph is also \lreg{\eps, p_0}. It follows that  $\tilde G'$ must be among the at most 
\[
\beta^{m_0}\binom{n_0^2}{m_0}^3
\]
graphs enumerated by \autoref{lregnb} (with $\eps\leftarrow \eps/2$, $\beta\leftarrow\beta$ and $m\leftarrow m_0$). As before a union bound shows that a.a.s.\ $G_{n,p}$ does not contain such a graph and thus $\tilde G$ also satisfies property (ii).
\end{proof}

\section{Proof of \autoref{pathExpansion}}\label{sec:pathExpansion}
\autoref{def:gtilde} already implies that every edge spanned by $V_1, V_2$ is contained in many
copies of $P^2_{k+4}$. The goal of this section is to show that these paths also reach a majority of
the edges spanned by $V_{k+4}, V_{k+5}$. As a first step we show that two edges cannot span too many
copies of $P^2_k$.
\begin{lemma}\label{spannedPaths}
For every $\gamma>0$, $k>\frac{3}{2}\cdot \frac{1+\gamma}{\gamma}$ and $p=n^{-(1-\gamma)/2}$ a.a.s.\
no pair of edges in $G_{n,p}$ is connected by more than $2n^{k-3}p^{2k-3}$ squares of paths of
length $k$.
\end{lemma}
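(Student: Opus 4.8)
The plan is to fix an unordered pair of potential edges $\{e,f\}$, let $X=X_{e,f}$ be the number of squares of paths $P_k^2$ whose first edge is $e$ and whose last edge is $f$, bound the upper tail of $X$ by the method of factorial moments, and finally take a union bound over the $O(n^4)$ such pairs. A copy of $P_k^2$ has $k+1$ vertices and $2k-1$ edges, so once the two end-edges are fixed there remain $k-3$ free interior vertices and $2k-3$ free edges. Conditioning on $e,f\in G_{n,p}$ does not affect the other edges, so $\E{X}\le n^{k-3}p^{2k-3}=:\mu_0$, and with $p=n^{-(1-\gamma)/2}$ one computes $\mu_0=n^{\gamma k-\frac32(1+\gamma)}$. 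Thus the hypothesis $k>\frac32\cdot\frac{1+\gamma}{\gamma}$ is exactly the assertion that $\mu_0$ is a growing polynomial in $n$; this is the only place the bound on $k$ enters, and it is what makes concentration possible.

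For the tail I would estimate the $s$-th factorial moment $\E{(X)_s}$, the expected number of ordered $s$-tuples of distinct connecting paths, for $s=\Theta(\log n)$, and apply Markov's inequality in the form
\[
\Pr{X>2\mu_0}\le\frac{\E{(X)_s}}{(2\mu_0)_s}.
\]
The crux is to show $\E{(X)_s}=(1+o(1))\mu_0^{\,s}$, i.e.\ that the interior-disjoint tuples dominate (these contribute at most $(\E X)^s\le\mu_0^{\,s}$). I would group the remaining tuples according to the isomorphism type of the graph $H$ obtained by gluing the $s$ copies of $P_k^2$ along the two fixed end-edges, so that a given type contributes $O(n^{\,v(H)-4}\,p^{\,e(H)-2})$. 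Writing $\Delta v$ and $\Delta e$ for the numbers of interior vertices and edges identified by such a gluing, what one needs is the density inequality $\Delta v>\tfrac{1-\gamma}{2}\Delta e$ for every nontrivial gluing; this is equivalent to $n^{\,v(H)-4}p^{\,e(H)-2}\le\mu_0^{\,s}\,n^{-\delta}$ for some $\delta=\delta(\gamma,k)>0$, so that every non-disjoint gluing is suppressed by a fixed positive power of $n$.

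Establishing this density inequality uniformly over all gluing types is the step I expect to be the main obstacle, and the extremal configurations are small overlaps adjacent to the fixed endpoints and ``near-identical'' pairs. In the first case, sharing the single interior vertex $v_3$ forces its two edges to the fixed vertices $v_1,v_2$ to be shared as well, giving the borderline ratio $\Delta e=2\Delta v$; here only the slack $\gamma>0$ in $p$ rescues a factor $n^{-\gamma}$. In the second case, two paths agreeing on all but one of the $k-3$ interior positions save $\Delta v=k-4$ vertices and $\Delta e=2k-7$ edges (every interior vertex of $P_k^2$ has degree $4$), with suppression exponent $\gamma(2k-7)/2-\tfrac12$, which the hypothesis on $k$ keeps strictly positive. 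The structural point that drives the case analysis is that $P_k^2$ is a power of a path and hence sparse: any vertex set spans at most about twice as many edges as it has vertices, and every saved edge is incident to a saved interior vertex, so edge-savings never outrun vertex-savings by more than the factor $\tfrac{2}{1-\gamma}$ the inequality permits, the near-identical pairs being the tightest (and still suppressed) configuration. Granting that the finitely many gluing types, together with the binomial bookkeeping over which of the $s$ paths overlap, contribute only a $(1+o(1))$ correction, one obtains $\Pr{X>2\mu_0}\le(\tfrac12+o(1))^s$; taking $s=\lceil 5\log_2 n\rceil\ll\mu_0$ then makes $n^4\cdot(\tfrac12+o(1))^s=o(1)$, completing the union bound.
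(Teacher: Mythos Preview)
Your approach is sound in outline but takes a much longer road than the paper does, and the step you flag with ``granting that\ldots'' is precisely where the real work lies. The paper's proof is a single sentence: it invokes Spencer's theorem on rooted extensions \cite{spencer90}, checking only that $P_k^2$ rooted at its two end-edges is strictly balanced and that the expected number of extensions satisfies $n^{k-3}p^{2k-3}\gg\log n$ (this last is exactly your observation that $\mu_0$ is a growing power of $n$). Spencer's theorem then gives concentration of the extension count around its mean for every fixed pair of root edges, and a union bound over $O(n^4)$ pairs finishes.

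What you are doing is effectively reproving the relevant instance of Spencer's theorem from scratch via high factorial moments. The density inequality you isolate, $\Delta v>\tfrac{1-\gamma}{2}\Delta e$ for every nontrivial overlap, is closely related to the strictly-balanced condition Spencer needs; your two ``extremal'' cases are the right ones to look at, and your arithmetic on them is correct. But the phrase ``finitely many gluing types'' hides the difficulty: for $s=\Theta(\log n)$ the number of ways an $s$-tuple of paths can overlap is not bounded independently of $n$, so a finite case analysis together with ``binomial bookkeeping'' does not immediately give $\E{(X)_s}\le(1+o(1))\mu_0^{\,s}$. One standard way to close this is the inductive step at the heart of Spencer's argument: conditioned on any realisation of the first $i$ paths, the expected number of choices for the $(i{+}1)$-st path is $\mu_0(1+o(1))$, where the strict-balancedness is exactly what controls the contribution of paths overlapping the existing union. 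Once you carry that out you have, in effect, reproved the cited theorem. So your proposal is not wrong, but as written it stops just short of the genuine content; citing Spencer (as the paper does) is both shorter and avoids re-deriving a known concentration result.
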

\begin{proof}
This follows directly from a theorem of Spencer on the number of graph extensions in the random
graph \cite{spencer90}. $P^2_k$ rooted at both of its end-edges is strictly rooted balanced and
$n^{k-3}p^{2(k-3)+3}\gg \log n$. Thus the number of squares of paths of length $k$ connecting any two edges is
concentrated around its expectation.
\end{proof}
The previous lemma easily implies a weaker version of \autoref{pathExpansion} where the constant
$0.52$ has to be replaced with a small value that depends on $\alpha$ and $\eta$. And this will indeed be the first step in the proof.
Next we prove a small lemma which contains two ad hoc arguments that will allow us to go from a small set of the edges
in $V_i,V_{i+1}$ to a slightly larger set of edges in $V_{i+1},V_{i+2}$.
\begin{lemma}\label{triangleExpansion}
Let $\eta,\alpha,\gamma,\eps>0$ be fixed. For $p=n^{-1/2+\gamma/2}$ in $G_{n,p}$ a.a.s.\ every copy
of $G=(V_1\cup V_2\cup V_3, E)\in\tildeG{K_3}{n_0}{p_0}{\eps}$, where $n_0\geq \eta n$ and
$p_0=\alpha p$, satisfies the following: for every set of edges $\tilde E\subseteq
E\left(V_1,V_2\right)$ denote with $\triangle (\tilde E)$ the number of edges in $E(V_2, V_3)$ which
form a triangle with some edge of $\tilde E$ in $G$. Let $\tilde V_2\subseteq V_2$ denote the
vertices which are incident to some edge of $\tilde E$ and set $s=\min_{v\in \tilde V_2} \deg_{\tilde E}(v)$. Then
\begin{equation*}
\triangle(\tilde E) \geq
\begin{cases}
\abs{\tilde V_2} n_0 p_0^2 /(2p) 					& \text{if } s\geq \log ^2(n)n_0p/n^\gamma,\\
(1-\eps)^2 \left(\abs{\tilde V_2}-5\eps n_0\right) n_0p_0 	& \text{if } s\geq 2\eps n_0p_0.
%0												& \text{otherwise.}
\end{cases}
\end{equation*}
\end{lemma}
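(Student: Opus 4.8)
The plan is to prove the two cases separately, since they correspond to genuinely different regimes of the minimum $\tilde E$-degree $s$. In both cases the goal is to lower-bound the number of edges in $E(V_2,V_3)$ that close a triangle with some edge of $\tilde E$, which is the same as counting the edges in $E(V_2,V_3)$ incident to a vertex of $\tilde V_2$ that also see a common neighbour in $V_3$. The natural object to control is, for each $v\in\tilde V_2$, the neighbourhood $N(v)\cap V_3$ and the edges it spans into $V_3$ via the $(V_2,V_3)$-graph. Property $(ii)$ of \autoref{def:gtilde} tells us that for all but $\eps n_0$ vertices $v\in V_2$ the neighbourhoods of $v$ into $V_1$ and $V_3$ induce an \lreg{\eps}{p_0} graph, and this lower-regularity is exactly what converts "$v$ has many $\tilde E$-neighbours in $V_1$" into "$v$ sees many $V_3$-edges closing triangles."

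\textbf{Second case ($s\geq 2\eps n_0 p_0$).} \emph{First I would} discard the at most $\eps n_0$ "bad" vertices of $V_2$ whose $V_1$/$V_3$-neighbourhoods fail to induce an \lreg{\eps}{p_0} graph, as well as (via \autoref{l:degreeok}) a further small set of vertices whose degree into $V_3$ is not roughly $(1\pm\eps)n_0p_0$; this costs only the $5\eps n_0$ slack in the bound. For each surviving good $v\in\tilde V_2$, its $\tilde E$-neighbourhood $U_v\subseteq N(v)\cap V_1$ has size $s\geq 2\eps n_0p_0\geq\eps\abs{N(v)\cap V_1}$, so it is a large enough fraction to invoke lower-regularity of the bipartite graph between $N(v)\cap V_1$ and $N(v)\cap V_3$. \emph{Then} \autoref{l:degreeok} applied to that \lreg{\eps}{p_0} graph gives that all but $\eps\abs{N(v)\cap V_3}$ vertices $w\in N(v)\cap V_3$ have a neighbour in $U_v$, i.e.\ at least $(1-\eps)\abs{N(v)\cap V_3}\geq(1-\eps)^2 n_0p_0$ vertices $w\in V_3$ such that $vw$ closes a triangle with an edge of $\tilde E$. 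Summing over the $\geq\abs{\tilde V_2}-5\eps n_0$ good vertices — and noting each counted edge $vw$ has $v\in\tilde V_2$ so there is no overcounting across different $v$ — yields $\triangle(\tilde E)\geq(1-\eps)^2(\abs{\tilde V_2}-5\eps n_0)n_0p_0$.

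\textbf{First case ($s\geq\log^2(n)\,n_0p/n^\gamma$).} Here $s$ may be far smaller than $\eps n_0 p_0$, so the $\tilde E$-neighbourhood of a vertex need not be a constant fraction of its $V_1$-degree and lower-regularity no longer directly applies. \emph{Instead} this is the regime where one counts triangles through $V_3$ but the dominant contribution comes from the sheer number of edges rather than from regularity; the target $\abs{\tilde V_2}n_0p_0^2/(2p)=\abs{\tilde V_2}n_0\alpha^2 p/2$ is much weaker (smaller by a factor $\sim p$) than the second case, which signals that a crude first-moment / expectation count suffices. \emph{I would} estimate, for a good $v\in\tilde V_2$, the expected number of edges in $E(V_2,V_3)$ incident to $N(v)\cap V_3$ and reaching back to an $\tilde E$-endpoint, using only that $\abs{N(v)\cap V_3}\approx n_0p_0$ together with property $(i)$ (each edge closes $\geq(1-\eps)n_0p^2$ triangles), and show these counts are essentially disjoint across distinct $v\in\tilde V_2$ up to a factor $2$, which is exactly the $1/2$ in the bound.

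\textbf{The hard part} will be the first case: controlling possible overcounting, since when $s$ is small the triangles closed by different vertices of $\tilde V_2$ can land on overlapping edges of $E(V_2,V_3)$, and a naive union over $v$ could collapse the bound. The quantitative input guarding against this is the hypothesis $s\geq\log^2(n)n_0p/n^\gamma$ — large enough that a second-moment or codegree argument (bounding how many pairs in $\tilde V_2$ share a common $V_3$-edge, controlled a.a.s.\ in $G_{n,p}$ by the typical codegree being $O(n_0^2p_0^3)$) shows the overlap is at most a constant fraction, giving the factor $2$. Pinning down that codegree bound uniformly over all edge sets $\tilde E$ — which requires an a.a.s.\ statement about $G_{n,p}$ rather than about a fixed graph, and hence a union bound over the (subgraph) structure — is where the "ad hoc" character flagged in the lemma statement concentrates, and is the step I would expect to demand the most care.
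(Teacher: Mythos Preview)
Your second case is correct and is essentially the paper's argument verbatim.

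For the first case you have misidentified the difficulty. There is \emph{no} overcounting across distinct $v\in\tilde V_2$: an edge of $E(V_2,V_3)$ that closes a triangle with some $uv\in\tilde E$ must share the $V_2$-endpoint $v$, so the contributing edges for different $v$ are automatically disjoint and $\triangle(\tilde E)=\sum_{v\in\tilde V_2}|V_3^v|$, where $V_3^v:=V_3\cap\Gamma(v)\cap\Gamma(V_1^v)$ and $V_1^v:=\Gamma_{\tilde E}(v)$. Your proposed codegree/second-moment step is therefore aimed at a non-problem, and the factor $1/2$ in the bound does not come from any cross-$v$ overlap.

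The real task, which your sketch does not address, is to lower-bound $|V_3^v|$ for a \emph{single} $v$. Property $(i)$ only bounds the number of \emph{pairs} $(u,w)$ with $u\in V_1^v$ and $uvw$ a triangle, i.e.\ $e(V_1^v,V_3^v)\geq(1-\eps)|V_1^v|\,n_0p_0^2$; this says nothing about $|V_3^v|$ without an upper bound on how many such edges a small $V_3^v$ could absorb. The paper supplies exactly that via an a.a.s.\ edge-density property of $G_{n,p}$: by Chernoff plus a union bound, for all disjoint $A,B$ with $|A|\geq\log^2(n)\,n_0p_0/n^{\gamma}$ and $|B|\leq b:=n_0p_0^2/(2p)$ one has $e(A,B)\leq(1+\eps)|A|\,bp$. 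Taking $A=V_1^v$ (this is where the hypothesis on $s$ enters, to make the union bound over $A$ affordable) and $B=V_3^v$, the assumption $|V_3^v|\leq b$ would give $e(V_1^v,V_3^v)\leq(1+\eps)|V_1^v|\,bp=\tfrac{1+\eps}{2}|V_1^v|\,n_0p_0^2$, contradicting the lower bound from property $(i)$. Hence $|V_3^v|\geq b$ for every $v\in\tilde V_2$, and summing yields the claimed $\triangle(\tilde E)\geq|\tilde V_2|\,n_0p_0^2/(2p)$ with no further work.
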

\begin{proof}
Fix $v\in \tilde V_2$.  Write $V^v_1=\Gamma_{\tilde E}(v)$ and let
$V^v_3=V_3\cap\Gamma(v)\cap\Gamma(V^v_1)$ denote the subset of vertices of $V_3$ which form a
triangle with $v$ and some edge from $\tilde E$. By definition of $G$ every edge in $\tilde E$ is
contained in at least $(1-\eps)n_0p_0^2$ triangles with $V_3$ and therefore
\begin{equation}\label{eq:l42}
	E(V^v_1, V^v_3)
	\geq (1-\eps)\abs{V^v_1}n_0p^2_0.
\end{equation}

A.a.s.\ in $G_{n,p}$ all disjoints sets $A$, $B$ of sizes $\abs{A}\geq \log^2(n) n_0p_0/n^\gamma$
and $\abs{B}\leq b = n_0 p^2_0/(2p)$ have at
most $(1+\eps)\abs{A} b p$ edges between them. To see this, observe that $\abs{A}bp \gg \log n\max\left\{\abs{A},b\right\}$ and the claim thus follows from Chernoff's inequality together with a straightforward union bound argument over all sets of size $\abs A$ and at most $b$. 

We now consider the two cases. If $s\geq
\log^2(n)n_0p_0/n^\gamma$ then $\eqref{eq:l42}$ implies
$E(V^v_1, V^v_3) \geq (1-\eps) \abs{V^v_1} n_0 p_0^2 = 2(1-\eps) \abs{V^v_1}bp$.
For $\abs{V_3^v}\le b$ this would contradict the bounds from the Chernoff inequality in the previous paragraph; thus $\abs{V_3^v}\ge b$, implying the desired bound.

%In particular every $v\in \tilde V_2$ is incident to at least $n_0 p_0^2/(2p)$ edges which form a
%triangle with some edge of $\tilde E$. Therefore $\triangle(\tilde E)\geq \abs{\tilde V_2} n_0
%p_0^2/(2p)$.

Now suppose that $s\geq 2\eps n_0p_0$ and that $v$ is such that its neighboorhoods in $V_1$ and $V_3$ are of size
$(1\pm\eps)n_0p_0$ and induce a \lreg{\eps}{p} subgraph. As $\abs {V^v_1} \geq s$ the assumption on $p$ and $v$ imply $\abs {V^v_1}  \geq \eps \abs{\Gamma(v)\cap V_1}$. Thus we can apply 
 \autoref{l:degreeok}  to deduce that at most $\eps\abs{\Gamma(v)\cap V_3}$ vertices in $\Gamma(v)\cap V_3$ have no neighboor in $V^v_1$. Thus 
$\abs{V^v_3}\geq (1-\eps)\abs{\Gamma(v)\cap V_3} \geq (1-\eps)^2n_0p_0$.
By \autoref{l:degreeok} at most $4\eps n_0$ vertices do have neighborhoods of the wrong size in either $V_1$ or $V_3$. By definition of $\tilde{\mathcal G}$ at most $\eps n_0$ vertices have neighborhoods which do not induce an \lreg{\eps}{p} subgraph. Thus $\triangle(\tilde E)\geq
\left(\abs{\tilde V_2}-5\eps n_0\right)  (1-\eps)^2n_0p_0$, as claimed.
\end{proof}

With these two lemmas at hand the proof of \autoref{pathExpansion} can be summarized as follows: Use
the weak version implied by \autoref{spannedPaths} to expand to a small fraction of the edges
spanned by $V_{k}, V_{k+1}$. Then invoke \autoref{triangleExpansion} (four times!) to expand to a
$0.52$-fraction of the edges in $V_{k+4}, V_{k+5}$. The details are given below.

\pathExpansion*
\begin{proof}Write $p_0=\alpha p$.
Since $G\in\tildeG{P^2_{k+4}}{n_0}{p_0}{\eps}$ every edge in $G[V_i \cup V_{i+1}]$ forms at least
$(1-\eps)n_0p_0^2$ triangles with $V_{i+2}$. In particular every edge spanned by $V_1\cup V_2$ is
connected to $E(V_k,V_{k+1})$ by $((1-\eps)n_0p^2_0)^{k-1}$ copies of $P^2_{k}$.
Furthermore since $k\geq \frac{3}{\gamma}> \frac{3}{2}\cdot\frac{1+\gamma}{\gamma}$ by
\autoref{spannedPaths} every edge in $E(V_1, V_2)$ is connected to at least 
\[
\frac{((1-\eps)n_0p^2_0)^{k-1}}{2n^{k-3}p^{2k-3}} = \Theta(n^2p) \gg n^2p/n^{\gamma/2}.
\]
distinct edges in $E(V_k, V_{k+1})$ via the square of a path.

Therefore it suffices to show that every set $E_0\subseteq E(V_{k},V_{k+1})$ of size at least
$n_0^2p_0/n^{\gamma/2}$ expands to at least a $0.52$-fraction of $E(V_{k+4}, V_{k+5})$. To this end we
will apply \autoref{triangleExpansion} four times to $G\leftarrow G[V_{k+i}\cup V_{k+i+1} \cup
V_{k+i+2}]$ for $i\in \left\{0,1,2,4\right\}$.

Set $s=\log^2(n)n_0p/n^\gamma$. In $G_{n,p}$ a.a.s.\ all degrees are bounded by $(1+o(1))np$.
Therefore we can find a set of vertices $\tilde V_{k+1}\subseteq V_{k+1}$ of size \[
\frac{\abs{E_0}-n_0s}{\Theta(np)}=\Theta(n^{1-\gamma/2}) \] and an edge set $\tilde E_0\subseteq
E_0$ such that each $v\in \tilde V_{k+1}$ is incident to exactly $s$ edges of $\tilde E_0$.

Apply \autoref{triangleExpansion} with $G\leftarrow G[V_{k}\cup V_{k+1}\cup V_{k+2}]$,  $\tilde
V_2\leftarrow \tilde V_{k+1}$, $\tilde E\leftarrow \tilde E_0$ and denote the set of edges which
form a triangle with some edge of $\tilde E_0$ with $E_1\subseteq E(\tilde V_{k+1}, V_{k+2})$. We
have $\abs{E_1}\geq \frac{p_0}{2p}\abs{\tilde V_{k+1}} n_0 p_0$. Denote with $\tilde
V_{k+2}\subseteq V_{k+2}$ the set of vertices which are incident to more than $s$ edges in $E_1$.
A.a.s.\ in $G_{n,p}$ there exists no set $S$ of size $\abs{\tilde V_{k+1}}=\Theta(n^{1-\gamma/2})$ such
that more than $\sqrt{n}$ vertices have degree at least $2\abs{\tilde V_{k+1}}p$ into $S$. Therefore for $\eps$
small enough depending on $\alpha$
\[
	\abs{\tilde V_{k+2}}
	\geq\frac{\abs{E_1}-\sqrt{n}\cdot\Theta(np)-n_0s}{2\abs{\tilde V_{k+1}}p}
	\geq\frac{\frac{p_0}{3p}\abs{\tilde V_{k+1}} n_0 p_0}{2\abs{\tilde V_{k+1}}p}
	\geq \frac{p_0^2}{6p^2}n_0
	\geq 100\eps n_0.
\]

As before pick a subset $\tilde E_1\subseteq E_1$ such that every $v\in \tilde V_{k+2}$ is incident
to exactly $s$ edges of $\tilde E_1$. Apply \autoref{triangleExpansion} a second time with
$G\leftarrow G[V_{k+1}\cup V_{k+2}\cup V_{k+3}]$, $\tilde V_2\leftarrow \tilde V_{k+2}$, $\tilde
E\leftarrow \tilde E_1$ and denote the set of edges which form a triangle with some edge of $\tilde
E_1$ with $E_2\subseteq E(V_{k+2},V_{k+3})$. We have $\abs{E_2}\geq \frac{p_0}{2p}\abs{\tilde
V_{k+2}}n_0p_0$.  Let $\tilde V_{k+3}\subseteq V_{k+3}$ denote the subset of vertices which are
incident to more than $s'=2\eps n_0p_0$ edges in $E_2$. As $E(V_{k+2}, V_{k+3})$ is \reg{\eps}, by \autoref{l:degreeok}
there are at most $\eps n_0$ vertices in $V_{k+3}$ with more than $(1+\eps)\abs{\tilde V_{k+2}}p_0$
neighbours in $\tilde V_{k+2}$. And by definition of \reg{\eps}ity these vertices are in total incident to at most
$\eps n_0 (1+\eps)|\tilde V_{k+2}| p_0$ edges. Therefore for $\eps$ sufficiently small
\[
	\abs{\tilde V_{k+3}}
	\geq\frac{\abs{E_2}- \eps n_0 (1+\eps)\abs{\tilde V_{k+2}}p_0-n_0s'}{(1+\eps) \abs{\tilde V_{k+2}}p_0}
	\geq \frac{\frac{p_0}{3p}\abs{\tilde V_{k+2}}n_0p_0}{(1+\eps) \abs{\tilde V_{k+2}}p_0}
        \geq  \frac{p_0}{4p}n_0
	\geq 100\eps n_0.
\]

As before pick a subset $\tilde E_2\subseteq E_2$ such that every $v\in \tilde V_{k+3}$ is incident
to exactly $s'$ edges of $\tilde E_2$. Apply \autoref{triangleExpansion} a third time with
$G\leftarrow G[V_{k+2}\cup V_{k+3}\cup V_{k+4}]$, $\tilde V_2\leftarrow \tilde V_{k+3}$, $\tilde
E\leftarrow \tilde E_2$ and denote the set of edges which form a triangle with some edge of $\tilde
E_2$ with $E_3\subseteq E(V_{k+3},V_{k+4})$. We have $\abs{E_3}\geq (1-\eps)^2\left(\abs{\tilde
V_{k+2}}-5\eps n_0\right)n_0p_0\geq \frac{6}{7}\abs{\tilde V_{k+2}} n_0p_0$.  Let $\tilde
V_{k+4}\subseteq V_{k+4}$ denote the subset of vertices which are incident to more than $s'=2\eps
n_0p_0$ edges in $E_3$. As before we obtain
\[
	\abs{\tilde V_{k+4}}
	\geq\frac{\abs{E_3}- \eps n_0 (1+\eps)\abs{\tilde V_{k+3}}p_0-n_0s'}{(1+\eps) \abs{\tilde V_{k+3}}p_0}
	\geq \frac{\frac{5}{7}\abs{\tilde V_{k+3}}n_0p_0}{(1+\eps) \abs{\tilde V_{k+3}}p_0}
        \geq  \frac{4}{7}n_0.
\]
Applying \autoref{triangleExpansion} a fourth time we see $E_3$ expands to $(1-\eps)^2(\abs{\tilde
V_{k+4}}-5\eps n_0)n_0p_0>0.53n^2_0p_0$ edges in $E(V_{k+4}, V_{k+5})$. This concludes the
proof.
\end{proof}

\bibliographystyle{abbrv}
\bibliography{square}

\end{document}